\def\tablenotes{\bgroup\parfillskip=0pt plus 1fil
\leftskip=0pt\relax \rightskip=0pt
\vskip2pt\footnotesize}
\def\endtablenotes{\vskip1pt\egroup}
\def\sphline{\noalign{\vskip3pt}\hline\noalign{\vskip3pt}}
\newtheorem{theorem}{Theorem}[section]
\newtheorem{definition}[theorem]{Definition}
\newcommand{\sech}{\mathrm{sech}}
\newcommand{\sinc}{\mathrm{sinc} \,}
\begin{document}
\title{Computing Energy Eigenvalues of Anharmonic Oscillators using the Double Exponential Sinc collocation Method}

\author{Philippe Gaudreau, Richard Slevinsky and Hassan Safouhi\footnote{Corresponding author: hsafouhi@ualberta.ca}\\
Mathematical Section, Campus Saint-Jean\\
University of Alberta\\
8406, 91 Street, Edmonton, Alberta T6C 4G9, Canada}

\date{}

\maketitle

\vspace*{0.75cm}
{\bf \large Abstract}. \hskip 0.10cm

A quantum anharmonic oscillator is defined by the Hamiltonian ${\cal H}= -\frac{ {\rm d^{2}}}{{\rm d}x^{2}} + V(x)$, where the potential is given by $V(x) = \sum_{i=1}^{m} c_{i} x^{2i}$ with $c_{m}>0$. Using the Sinc collocation method combined with the double exponential transformation, we develop a method to efficiently compute highly accurate approximations of energy eigenvalues for anharmonic oscillators. Convergence properties of the proposed method are presented.  Using the principle of minimal sensitivity, we introduce an alternate expression for the mesh size for the Sinc collocation method which improves considerably the accuracy in computing eigenvalues for potentials with multiple wells.

We apply our method to a number of potentials including potentials with multiple wells. The numerical results section clearly illustrates the high efficiency and accuracy of the proposed method. All our codes are written using the programming language Julia and are available upon request.

\vspace*{0.5cm}
{\bf Keywords}

Anharmonic oscillators. Time independent Schr\"odinger equation. Potentials with multiple wells. Sinc collocation method. Double exponential transformation.  Principle of minimal sensitivity.

\clearpage
\section{Introduction}

The one dimensional anharmonic oscillator is of great interest to field theoreticians because it models complicated fields in one-dimensional space-time~\cite{Bender1969}. A complete overview of quantum anharmonic oscillators would lead to a better understanding of the realistic analytic structure of field theory. Moreover, outside the realm of field theory, the one dimensional anharmonic oscillator also provides an approximation to more complicated quantum potentials near a stable stationary point. The study of quantum anharmonic oscillators as potentials in the Schr\"odinger equation has been on the edge of thrilling and exciting research during the past three decades~\cite{Bender-Orszag-78, Weniger-246-133-96, Weniger-Cizek-Vinette-34-571-93, Zamastil-Cizek-Skala-276-39-99, Patnaik-35-1234-87, Burrows1989,Bender-Wu-7-1620-73, Amore2004, Benassi1979, Adhikari-Dutt-Varshni-131-217-88, Datta-Rampal-23-2875-81, Nanayakkara-289-39-01, Macfarlane1999, Okopinska-36-1273-87, Bozzolo-Plastino-24-3113-81, SouzaDutra-Castro-Boshci-51-3480-95, Flessas1981a, Skala-Cizek-Dvorak-Spirko-53-3241-96, Skala-Dvorak-Kaspa-36-2953-97, Tater-20-2483-87, Tater-Turbiner-26-697-93, Chaudhuri-Mondal-43-3241-91, Agrawal-Varma-49-5089-94, Drozdov-28-445-95, Singh-Singh-Singh-148-389-90, Chhajlany-Letov-Malnev-24-2731-91, Znojil1992, Killingbeck1986b, Fernandez1989a, Bessis-Bessis-38-5483-97, Fernandez-Ma-Tipping-39-1605-89}. With advances in asymptotic analysis and symbolic computing algebra, the interest in developing more efficient methods was renewed recently \cite{Safouhi46, Turbiner-25-647-10, Turbiner2005, Barakat-344-411-05, Bellet2005}. The Hamiltonian in the time-independent Schr\"odinger equation is given by ${\cal H}= -\frac{ {\rm d^{2}}}{{\rm d}x^{2}} + V(x)$ for some potential function $V(x)$. In the case of the quantum anharmonic oscillator, the potential $V(x)$ is an even function of the form $V(x) = \sum_{i=1}^{m} c_{i} x^{2i}$ with $c_{m}>0$. Several approaches have been used for the numerical evaluation of the differential eigenvalue problem ${\cal H}\psi =E\psi$.  However, the existing numerical methods are mostly case specific and lack uniformity when faced with a general problem.

In \cite{Bender-Orszag-78, Weniger-246-133-96, Weniger-Cizek-Vinette-34-571-93}, Rayleigh-Schr\"odinger perturbation series are used to evaluate the ground state energy for potentials of the form $V(x)=x^2 +\beta x^{2m}$ for $\beta\in[0,\infty)$ and $m=2,3,4$.  These summations are strongly divergent for $\beta \neq 0$. To sum them efficiently, Pad\'e approximants combined with nonlinear sequence transformations are used. In \cite{Zamastil-Cizek-Skala-276-39-99}, Rayleigh-Schr\"odinger perturbation series are also used to evaluate energies of the ground state and the first excited state for potentials of the form $V(x)=x^2 +\beta x^{4}$. In \cite{Patnaik-35-1234-87}, Rayleigh-Schr\"odinger perturbation series are used to evaluate energies of the ground state and the first four excited states for the Hamiltonian of the form ${\cal H}= -\frac{1}{2}\frac{ {\rm d^{2}}}{{\rm d}x^{2}} + \frac{1}{2} x^2 +\lambda x^4$ in the limits $\lambda \to 0^{+}$ and $\lambda \to \infty$. In \cite{Burrows1989}, exact soluble models are used to construct Rayleigh-Schr\"odinger perturbation series for the eigenvalues of the anharmonic potentials of the form $V(A, E) = \frac{1}{2}Ax^2 + Ex^4$. In \cite{Bender-Wu-7-1620-73, Amore2004, Benassi1979}, a study of Rayleigh-Schr\"odinger perturbation series is presented using the Wentzel-Kramers-Brillouin~(WKB) method and a difference equation method.

 In \cite{Adhikari-Dutt-Varshni-131-217-88}, an averaging method is proposed to calculate energy eigenvalues for potentials of the form  $V(x)=\lambda x^{2m}$ for $m=2,3,\ldots$ with $\lambda>0$, $V(x)= \mu x^2 + \lambda x^{4} + \eta x^6$ with $\eta>0$ and $V(x)= (ax^3 +bx)^2$ using a supersymmetric WKB approach. In \cite{Datta-Rampal-23-2875-81}, the first few terms of the asymptotic expansion for the energy eigenvalues of the potential $V(x)= ax^2 + bx^{4} + cx^6$ as $n\to\infty$ and in the large coupling limit $c\to\infty$ is found. In \cite{Nanayakkara-289-39-01}, an asymptotic expansion is presented for the energy values of potentials of the form  $V(x)=\sum_{i=}^{N} a_{i}x^{i} + \sum_{j=1}^{M} c_{j}x^{-j}$. This method allows for an easier way to obtain analytically the coefficients for the leading terms in the WKB expansion, which normally would require computation of a considerably large number of complicated contour integrals. As an example of application, the first seven coefficients of the WKB expansion for the energy eigenvalues of the potentials $V(x) = x^4 + bx^2$ and $V(x) = x^6$ are presented. In \cite{Macfarlane1999}, the use of the WKB method and the Lanczos algorithm is used to calculate energy eigenvalues of the potential $V(x) = \frac{1}{2} x^2 + \lambda \,x^{2m}$ to a high accuracy. In \cite{Okopinska-36-1273-87}, the variational principle is used to calculate the first $n$ energy eigenvalues using a Rayleigh-Ritz matrix for the perturbed Hamiltonian ${\cal H}= \frac{1}{2}\frac{ {\rm d^{2}}}{{\rm d}x^{2}} + \frac{1}{2} \Omega^2 x^2 + \epsilon [\lambda x^4 +\frac{1}{2} ( \omega^2 -\Omega^2)x^2]$. Due to optimization of the nonlinear parameter $\Omega$ the method  is  applicable when the parameter $\omega^2<0$ and enables accurate determination of  many lowest eigenvalues even in deep double well potentials. In \cite{Bozzolo-Plastino-24-3113-81}, a variational approach is used to obtain the energies of the ground state and the first excited state of potentials of the form $V(x)=x^2 +\lambda x^{2m}$ for $m=2,\ldots,6$, as well as the energies of the ground state for the potential $V(x)=x^2 +\lambda x^{4} + \mu x^6$. In \cite{SouzaDutra-Castro-Boshci-51-3480-95}, approximate analytic expressions for the energy eigenvalues of the anharmonic oscillator $V(x) =Ax^6 + Bx^2$ is introduced. These approximate solutions were derived from particular analytic solutions which are valid when certain relations between the parameters $A$ and $B$ are satisfied. In \cite{Flessas1981a}, exact solutions in the form of definite integrals are found for the anharmonic oscillator of the form $V(x) = \frac{1}{2} \omega^2 x^2 + \frac{1}{4} \lambda x^4 +\frac{1}{6} \eta x^6$.

The Hill determinant method as well as the Hill determinant method with a variational parameter have also shown great promises \cite{Tater-Turbiner-26-697-93, Chaudhuri-Mondal-43-3241-91, Agrawal-Varma-49-5089-94, Drozdov-28-445-95, Singh-Singh-Singh-148-389-90, Chhajlany-Letov-Malnev-24-2731-91, Znojil1992}. To create a Hill determinant matrix, one starts by substituting a suitable ansatz into the Schr\"{o}dinger equation of the from $\psi(x) = e^{s(x)}\sum_{i=0}^{\infty} \gamma_{i} x^{2i + \delta}$, where $s(x)$ is an even polynomial function that depends on the potential $V(x)$ and $\delta = 0 \, \, {\rm or} \, \, 1$ depending on the parity of the solution. With this substitution, one obtains a recurrence relation for the coefficients $\gamma_{i}$. By rewriting this recurrence relation in a matrix form and setting the determinant of this matrix to zero, one can create a sequence of approximations for the energy eigenvalues of the potential $V(x)$. In \cite{Killingbeck1986b}, the discretization of the Hamiltonian operator using a finite difference technique is discussed to solve this type of eigenvalue problem.

The Riccati equation for the logarithmic derivative of the wavefunction using Pad\'{e} approximants or the Turbiner method has also been used extensively. Further analysis of the Riccati equation solution leads to a better understanding of the overall nature of the wavefunction and thus its energy eigenvalues. In \cite{Fernandez1989a}, a non-perturbative method utilizing the solution to the Riccati equation is proposed for finding energy eigenvalues. The method is applied to the potential $V(x)=x^2 + \lambda x^4$ and yields good estimates for lower energy values. In \cite{Bessis-Bessis-38-5483-97}, a method utilizing the solution to the Riccati equation for finding exact solutions to anharmonic oscillators is discussed. The method is applied to the potential $V(x)=x^2 + \lambda x^4$ and energy eigenvalues are computed for coupling constants ranging from  $\lambda=0.002$ to $\lambda =20000$. In \cite{Fernandez-Ma-Tipping-39-1605-89}, a method is introduced based on rational approximations to the solution of the Riccati equation to obtain tight lower and upper bounds for the energy eigenvalues of anharmonic oscillators.

Recently~\cite{Safouhi46}, an asymptotic expansion for the energy eigenvalues of the potential $V(x) \,=\, \kappa \, x^{2q} + \omega \, x^2,$ where $\kappa \in \mathbb{R}^{+}$, $\omega \in \mathbb{R}$ and $q \in \mathbb{N} \setminus \{1\}$ as the energy level $n$ approaches infinity is derived using the WKB method and series reversion. In \cite{Turbiner2005, Turbiner-25-647-10}, the potential $V(x)=ax^2 + \lambda x^4$ with $a<0$ and $\lambda \geq 0$ is explored. Using asymptotic expansion of the Riccati equation solutions, an approximate solution is found which yields 9-10 significant digits for energy values. In \cite{Barakat-344-411-05}, an asymptotic iteration method is used to calculate the energy eigenvalues of potentials of the form $ V(x) = A x^{2\alpha} + B x^2$.

As can be seen by the numerous approaches which have been made to solve this problem, there is a beautiful diversity yet lack of uniformity in its resolution. While several of these methods yield excellent results for specific cases, it would be favorable to have one general method that could handle any anharmonic potential while being capable of efficiently computing approximations of eigenvalues to a high pre-determined accuracy.

The Sinc collocation method (SCM) has been used extensively during the last 30 years to solve many problem in numerical analysis~\cite{Stenger-33-85-79, Stenger-23-165-81, Stenger-121-379-00, Carlson-Dockery-Lund-66-215-97, Amore-39-L349-06, McArthur-Bowers-Lund-3-169-87, ElGamel-Zayed-48-1285-04, Lund-47-571-86, ElGamel-Cannon-Zayed-73-1325-03, Smith-Bogar-Bowers-Lund-28-760-91}. Their applications include numerical integration, linear and non-linear ordinary differential equations, partial differential equations, interpolation and approximations to function derivatives. Recently, combination of the SCM with the double exponential (DE) transformation has sparked great interest~\cite{Sugihara-Matsuo-164-673-04, Tanaka-Sugihara-Murota-78-1553-09}. The double exponential transformation, introduced in~\cite{Takahasi-Mori-9-721-74} yields optimal accuracy for a given number of function evaluations when using the trapezoidal rule in numerical integration~\cite{Mori-Sugihara-127-287-01}. Since its derivation in 1974, many have studied its effectiveness in computing integrals~\cite{Takahasi-Mori-9-721-74, Sugihara-75-379-97}.

In~\cite{Gaudreau-Slevinsky-Safouhi-14}, we used the SCM with the DE transformation to efficiently solve singular Sturm-Liouville eigenvalue problems. In the following work, we apply this method for computing energy eigenvalues of anharmonic oscillators to unprecedented accuracy. The double exponential Sinc collocation method (DESCM) starts by approximating the wave function as a series of weighted Sinc functions. By substituting this approximation in the Schr\"odinger equation and evaluating this expression at several collocation points spaced equally by a specified mesh size $h$, we obtain a generalized eigensystem which can be transformed into a regular eigenvalue problem. For potentials with multiple wells, the existing expression for the (optimal) mesh size $h$ turns out to be not very effective. In such cases, we introduce an alternate mesh size $\hat{h}$ by minimizing the trace of the resulting matrix. This minimization is justified by the principle of minimal sensitivity.

The proposed method has numerous advantages over the existing alternatives. For example, the methods generality allows for its application to a large set of anharmonic potentials and is insensitive to changes in the potential parameters. The method is now shown to be also efficient and accurate when dealing with potentials with multiple wells. In addition, the DESCM has a near-exponential convergence rate. Moreover, the matrices generated by the DESCM have useful symmetric properties which simplify considerably the computation of their eigenvalues.

\clearpage
\section{General definitions and properties}\label{The double exponential Sinc collocation method}
The sinc function is defined by the following expression:
\begin{equation} \label{formula: sinc functions}
\sinc(z) = \dfrac{\sin(\pi z)}{\pi z}, \qquad z \in \mathbb{C}.
\end{equation}

The Sinc function $S(j,h)(x)$ for $h \in \mathbb{R}^{+}$ and $j \in \mathbb{Z}$ is given by:
\begin{equation}
S(j,h)(x) = \sinc \left( \dfrac{x-jh}{h}\right).
\end{equation}

The discrete orthogonality of Sinc functions is given by:
\begin{equation}
S(j,h)(kh)  =  \delta_{j,k}   \qquad \textrm{for} \qquad j,k \in \mathbb{Z},
\end{equation}
where $\delta_{j,k}$ is the Kronecker's delta function.

Similarly to Fourier series, we can expand well-defined functions as series of Sinc functions. Such expansions are known as Sinc expansions or Whittaker Cardinal expansions.
\begin{definition}\cite{Stenger-23-165-81}
Given any function $v(x)$ defined everywhere on the real line and any $h>0$, the Sinc expansion of $v(x)$ is defined by the following series:
\begin{equation}\label{formula: Sinc expansion}
C(v,h)(x) = \sum_{j=-\infty}^{\infty} v_{j,h} S(j,h)(x),
\end{equation}
where $v_{j,h} = v(jh)$.
\end{definition}

The non-symmetric truncated Sinc expansion of the function $v(x)$ is defined by the following series:
\begin{equation}\label{formula: Truncated Sinc expansion}
C_{N}(v,h)(x) = \sum_{j=-N}^{M} v_{j,h} \, S(j,h)(x) \qquad \textrm{for} \qquad N, M \in \mathbb{N}.
\end{equation}
The symmetric truncated Sinc expansion is obtained by taking $M=N$ in the above equation.

In~\cite{Stenger-23-165-81}, a class of functions which is successfully approximated by a Sinc expansion is introduced. We present the definition for this class of functions bellow.
\begin{definition} \cite{Stenger-23-165-81} \label{defintion: Bd function space}
Let $d>0$ and let $\mathscr{D}_{d}$ denote the strip of width $2d$ about the real axis:
\begin{equation}
\mathscr{D}_{d} = \{ z \in \mathbb{C} : |\,\Im (z)|<d \}.
\end{equation}
In addition, for $\epsilon \in(0,1)$, let $\mathscr{D}_{d}(\epsilon)$ denote the rectangle in the complex plane:
\begin{equation}
\mathscr{D}_{d}(\epsilon) = \{z \in \mathbb{C} : |\,\Re(z)|<1/\epsilon, \, |\,\Im (z)|<d(1-\epsilon) \}.
\end{equation}
Let ${\bf B}_{2}(\mathscr{D}_{d})$ denote the family of all functions $g$ that are analytic in $\mathscr{D}_{d}$, such that:
\begin{equation}\label{formula: integral imaginary}
\displaystyle \int_{-d}^{d} | \,g(x+iy)| \, \textrm{d}y \to 0 \quad \textrm{as} \quad x \to \pm \infty \qquad \textrm{and} \qquad  \mathcal{N}_{2}(g,\mathscr{D}_{d}) = \displaystyle \lim_{\epsilon \to 0} \left(  \int_{\partial \mathscr{D}_{d}(\epsilon)}  |\,g(z)|^{2}\, |\textrm{d}z| \right)^{1/2} <\infty.
\end{equation}
\end{definition}

The time independent Schr\"{o}dinger equation is given by:
\begin{equation}\label{formula:Schrodinger equation}
{\cal H} \, \psi(x) \, \,=\,  E \, \psi(x),
\end{equation}
where the Hamiltonian is given by the following linear operator:
$$
{\cal H} = -\dfrac{{\rm d}^2}{{\rm d} x^2} +V(x),
$$
where $V(x)$ is the potential energy function.

In the case of anharmonic oscillators, the potential $V(x)$ is given by:
\begin{equation}\label{formula: anharmonic oscillator}
V(x)= \displaystyle \sum_{i=1}^{m}c_{i}x^{2i} \qquad  \textrm{with} \qquad  c_{m} > 0 \quad \textrm{and} \quad m \in \mathbb{N}\backslash\{1\}.
\end{equation}

The time independent Schr\"{o}dinger equation~\eqref{formula:Schrodinger equation} can be written as the following boundary value problem:
\begin{equation}\label{formula: Schrodinger equation sturm liouville}
- \psi^{\prime \prime}(x) + V(x)\psi(x) = E \psi(x)  \qquad \textrm{with} \qquad \displaystyle \lim_{|x| \to \infty} \psi(x) = 0.
\end{equation}

Equation~\eqref{formula: Schrodinger equation sturm liouville} is similar to the Sturm-Liouville problem to which we applied successfully the DESCM~\cite{Gaudreau-Slevinsky-Safouhi-14}.

As we stated in~\cite{Gaudreau-Slevinsky-Safouhi-14}, Eggert et al.~\cite{Eggert-Jarratt-Lund-69-209-87} demonstrate that applying an appropriate substitution to the boundary value problem~\eqref{formula: Schrodinger equation sturm liouville}, results in a symmetric discretized system when using Sinc expansion approximations. The change of variable they propose is given by:
\begin{equation}
v(x) = \left(\sqrt{ (\phi^{-1})^{\prime} } \, \psi \right) \circ \phi(x) \qquad \Longrightarrow \qquad  \psi(x)  =  \dfrac{ v \circ \phi^{-1}(x)}{\sqrt{ (\phi^{-1}(x))^{\prime}}},
\label{formula: EggertSub}
\end{equation}
where $\phi^{-1}(x)$ a conformal map of a simply connected domain in the complex plane with boundary points $a\neq b$ such as $\phi^{-1}(a)=-\infty$ and $\phi^{-1}(b)=\infty$.

Applying the substitution~\eqref{formula: EggertSub} to~\eqref{formula: Schrodinger equation sturm liouville}, we obtain:
\begin{align}\label{formula: transformed Schrodinger equation}
\hat{\mathcal{H}} \, v(x) & =  - v^{\prime \prime}(x) + \tilde{V}(x) v(x)
\, = \, E (\phi^{\prime}(x))^{2} v(x),
\end{align}
where:
\begin{equation}
\tilde{V}(x)  =  - \sqrt{\phi^{\prime}(x)} \, \dfrac{{\rm d}}{{\rm d} x} \left( \dfrac{1}{\phi^{\prime}(x)} \dfrac{{\rm d}}{{\rm d} x}( \sqrt{\phi^{\prime}(x)})  \right) + (\phi^{\prime}(x))^{2} V(\phi(x)) \quad \textrm{and} \quad \lim_{ |x| \to \infty} v(x) = 0.
\end{equation}

\section{The double exponential Sinc collocation method (DESCM)}

A function $\omega(x)$ decays double exponentially at infinities if there exist positive constants $A, B,\gamma$ such that:
\begin{equation}
|\, \omega(x)| \leq A \exp( -B \exp(\gamma|\,x|)) \qquad \textrm{for} \qquad x\in \mathbb{R}.
\end{equation}

The double exponential transformation is a conformal mapping $\phi(x)$ which allows for the solution of~\eqref{formula: transformed Schrodinger equation} to have double exponential decay at both infinities.

To implement the DESCM, we begin by approximating the solution of~\eqref{formula: transformed Schrodinger equation} by a truncated Sinc expansion~\eqref{formula: Truncated Sinc expansion}.

Inserting~\eqref{formula: Truncated Sinc expansion} into~\eqref{formula: transformed Schrodinger equation}, we obtain the following system of equations:
\begin{align}\label{formula: index solution}
\hat{\mathcal{H}} \, C_{N}(b,h)(x_{k}) & = \displaystyle \sum_{j=-N}^{N} \left[ - \dfrac{{\rm d}^2}{{\rm d} x_{k}^{2}} S(j,h)(x_{k})  + \tilde{V}(x_{k}) S(j,h)(x_{k}) \right] v_{j,h} \\
& = \mathcal{E} \displaystyle \sum_{j=-N}^{N} S(j,h)(x_{k}) (\phi^{\prime}(x_{k}))^{2}  v_{j,h} \qquad \textrm{for} \qquad k = -N, \ldots, N,
\end{align}
where the collocation points $x_{k}=kh$ and $\mathcal{E}$ is an approximation of the eigenvalue $E$ in~\eqref{formula: transformed Schrodinger equation}.

The above equation can be re-written as follows:
\begin{equation}\label{formula: index solution simplified}
\hat{\mathcal{H}} \, C_{N}(v,h)(x_{k}) =  \sum_{j=-N}^{N} \left[ -\dfrac{1}{h^{2}} \, \delta^{(2)}_{j,k} + \tilde{V}(kh) \, \delta^{(0)}_{j,k}\right] v_{j,h} = \mathcal{E}  \displaystyle \sum_{j=-N}^{N} \delta^{(0)}_{j,k} (\phi^{\prime}(kh))^{2}  v_{j,h} \quad {\rm for} \quad k = -N, \ldots, N,
\end{equation}
where $\delta^{(l)}_{j,k}$ are given by~\cite{Stenger-33-85-79}:
\begin{equation}
\delta^{(l)}_{j,k} =  h^{l} \left. \left( \dfrac{d}{d x} \right)^{l} S(j,h)(x) \right|_{x=kh}.
\end{equation}

Equation~\eqref{formula: index solution simplified} can be represented in matrix form as follows:
\begin{align}\label{formula: matrix solution}
\hat{\mathcal{H}} \, {\bf C}_{N}(v,h) &  = {\bf H}{\bf v} \,=\, \mathcal{E} {\bf D}^{2}{\bf v} \quad \Longrightarrow \quad ({\bf H} - \mathcal{E} {\bf D}^{2}){\bf v} \,=\, 0,
\end{align}
where:
\begin{align*}
{\bf v} & = (v(-Nh),\ldots, v(Nh))^{T} \qquad \textrm{and} \qquad {\bf C}_{N}(v,h)  = (C_{N}(v,h)(-Nh), \ldots, C_{N}(v,h)(Nh) )^{T}.
\end{align*}

${\bf H}$ is a $(2N+1) \times (2N+1)$ matrix with entries $H_{j,k}$ given by:
\begin{equation}\label{formula: H components}
H_{j,k} =   -\dfrac{1}{h^{2}} \, \delta^{(2)}_{j,k} + \tilde{V}(kh) \, \delta^{(0)}_{j,k} \qquad {\rm with} \qquad -N \leq j,k \leq N,
\end{equation}
and ${\bf D}^{2}$ is a $(2N+1) \times (2N+1)$ diagonal matrix with entries $D^{2}_{j,k}$ given by :
\begin{equation}\label{formula: D components}
D^{2}_{j,k} =  (\phi^{\prime}(kh))^{2}  \, \delta^{(0)}_{j,k}  \qquad {\rm with} \qquad -N \leq j,k \leq N.
\end{equation}

To obtain nontrivial solutions for~\eqref{formula: matrix solution}, we have to set:
\begin{equation}
\det({\bf H}-\mathcal{E}{\bf D}^{2}) = 0.
\label{formula: matrix solution-DET}
\end{equation}

To find an approximation of the eigenvalues of equation \eqref{formula: transformed Schrodinger equation}, one simply has to solve this generalized eigenvalue problem. The matrix ${\bf D}^{2}$ is symmetric positive definite and the matrix ${\bf H}$ is symmetric.  If there exits a constant $\delta>0$ such that $\tilde{V}(x)\geq \delta^{-1}$, then the matrix ${\bf H}$ is symmetric positive definite.

In \cite[Theorem 3.2]{Gaudreau-Slevinsky-Safouhi-14}, we present the convergence analysis of DESCM which we state here in the case of the transformed Schr\"odinger equation~\eqref{formula: transformed Schrodinger equation}. The proof of the Theorem is given in~\cite{Gaudreau-Slevinsky-Safouhi-14}.

\begin{theorem}\cite[Theorem 3.2]{Gaudreau-Slevinsky-Safouhi-14} \label{corollary: convergence of eigenvalues}
Let $E$ and $v(x)$ be an eigenpair of the transformed Schr\"odinger equation:
\begin{equation}\label{transformed Sturm Liouville}
 - v^{\prime \prime}(x) + \tilde{V}(x) v(x) = \, E (\phi^{\prime}(x))^{2} v(x),
\end{equation}
where:
\begin{equation}
\tilde{V}(x)  =  - \sqrt{\phi^{\prime}(x)} \, \dfrac{{\rm d}}{{\rm d} x} \left( \dfrac{1}{\phi^{\prime}(x)} \dfrac{{\rm d}}{{\rm d} x}( \sqrt{\phi^{\prime}(x)})  \right) + (\phi^{\prime}(x))^{2} V(\phi(x)) \quad \textrm{and} \quad \lim_{ |x| \to \infty} v(x) = 0.
\end{equation}
Assume there exist positive constants $A,B,\gamma$ such that:
\begin{equation}\label{formula: xi growth condtion}
 |v(x)| \leq A \exp( - B \exp(\gamma |x|)) \qquad \textrm{for all} \qquad x \in \mathbb{R},
\end{equation}
and that $v \in {\bf B}_{2}(\mathscr{D}_{d})$ with $d \leq \dfrac{\pi}{2 \gamma}$.

If there is a constant $\delta>0$ such that $\tilde{V}(x)\geq \delta^{-1}$ and the selection of the optimal mesh size $h$ is such that:
\begin{equation}\label{optimal mesh size}
h = \dfrac{W(\pi d \gamma N / B)}{\gamma N},
\end{equation}
where $W(x)$ is the Lambert W function.

Then, there is an eigenvalue $\mathcal{E}$ of the generalized eigenvalue problem satisfying:
\begin{equation}\label{convergence result}
|\mathcal{E}-E|   \leq \vartheta_{v,d} \sqrt{\delta E} \left(\dfrac{N^{5/2}}{\log(N)^{2}} \right)  \exp \left(-  \dfrac{\pi d \gamma N}{\log(\pi d \gamma N/B)} \right) \quad \textrm{as} \quad N\to \infty,
\end{equation}
where $\vartheta_{v,d}$ is a constant that depends on $v$ and $d$.
\end{theorem}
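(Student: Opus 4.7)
The plan is to reduce the generalized eigenvalue problem $(\mathbf{H} - \mathcal{E}\mathbf{D}^2)\mathbf{v} = 0$ to a standard symmetric eigenvalue problem and then bound the residual produced when the true eigenpair is substituted. Since $\mathbf{D}^2$ is diagonal positive definite and $\mathbf{H}$ is symmetric, the congruence $\mathbf{A} := \mathbf{D}^{-1}\mathbf{H}\mathbf{D}^{-1}$ has the same eigenvalues as the generalized problem and is itself symmetric. A Bauer--Fike / Kato--Temple type argument then guarantees that for any trial vector $\mathbf{u}^{*}$, some eigenvalue $\mathcal{E}$ of $\mathbf{A}$ lies within $\|\mathbf{A}\mathbf{u}^{*} - \rho\mathbf{u}^{*}\|_{2}/\|\mathbf{u}^{*}\|_{2}$ of the Rayleigh quotient $\rho$. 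I would take $\mathbf{v}^{*}_{j} = v(jh)$ (so $\mathbf{u}^{*}_{j} = \phi'(jh)v(jh)$) and $\rho = E$, reducing everything to an $\ell^{2}$ bound on $(\mathbf{H}-E\mathbf{D}^{2})\mathbf{v}^{*}$. By the cardinal property of the Sinc functions and the identity $\hat{\mathcal{H}}v = E(\phi')^{2}v$, that residual equals $\hat{\mathcal{H}}\bigl(C_{N}(v,h)-v\bigr)$ evaluated at the collocation points $x_{k}=kh$.

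The core estimate is therefore a derivative-level bound on the Sinc interpolation error for $v$. Classical DE-Sinc approximation theory (Sugihara; Tanaka--Sugihara--Murota) controls $v - C_{N}(v,h)$ by two contributions: a discretization/aliasing error of order $\exp(-\pi d/h)$ coming from the $\mathbf{B}_{2}(\mathscr{D}_{d})$ analyticity, and a truncation error of order $A\exp(-B\exp(\gamma Nh))$ coming from the double-exponential decay hypothesis \eqref{formula: xi growth condtion}. Balancing these two terms amounts to solving $\pi d/h = B\exp(\gamma Nh)$; with $y=\gamma Nh$ this rewrites as $y e^{y} = \pi d\gamma N/B$, which gives $\gamma N h = W(\pi d\gamma N/B)$, exactly the prescribed mesh size \eqref{optimal mesh size}. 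The common value of the two errors at this balance is the stated near-exponential rate $\exp\!\left(-\pi d\gamma N/\log(\pi d\gamma N/B)\right)$.

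The polynomial prefactor $N^{5/2}/\log(N)^{2}$ is the price paid for applying the unbounded operator $-\mathrm{d}^{2}/\mathrm{d}x^{2}$ to the interpolation error. Differentiating the cardinal series twice contributes a factor $h^{-2}\sim (\gamma N/\log N)^{2}$ through the coefficients $\delta^{(2)}_{j,k}/h^{2}$, while converting a uniform bound over the $2N+1$ collocation points into an $\ell^{2}$ norm contributes another $\sqrt{2N+1}$. The $\sqrt{\delta E}$ factor appears when the $\ell^{2}$ residual is mapped back to an eigenvalue perturbation of $\mathbf{A}$: the bound $\|\mathbf{D}^{-1}\|^{2}\le\delta$ (valid under $\tilde V\ge\delta^{-1}$) together with the normalization $\|\mathbf{v}^{*}\|_{\mathbf{D}^{2}}\sim\sqrt{E}$ inherited from the continuous eigenvalue equation produces precisely this factor.

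The principal obstacle is simultaneously controlling $v$, $v'$ and $v''$ within a $\mathbf{B}_{2}(\mathscr{D}_{d})$-compatible framework while passing through $-\mathrm{d}^{2}/\mathrm{d}x^{2}$ without losing the near-exponential rate. One must verify that the discrete second-derivative operator $h^{-2}\delta^{(2)}_{j,k}$ applied to $C_{N}(v,h)-v$ remains governed by the Sinc approximation bounds rather than being swamped by the $h^{-2}$ amplification; this requires a Parseval-type identity for the Sinc differentiation matrix combined with the double-exponential decay of $v''$ inherited from that of $v$. Once this uniform control is in place, the remaining steps are standard applications of symmetric eigenvalue perturbation theory and of the optimal-balance computation for the Lambert $W$ function.
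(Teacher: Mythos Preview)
The paper does not actually prove this theorem: immediately after the statement it writes ``The proof of the Theorem is given in~\cite{Gaudreau-Slevinsky-Safouhi-14}.'' So there is no in-paper argument to compare against; the result is imported wholesale from the companion paper on singular Sturm--Liouville problems.

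Your overall strategy --- symmetrize via $\mathbf{A}=\mathbf{D}^{-1}\mathbf{H}\mathbf{D}^{-1}$, plug in the sampled exact eigenfunction as a trial vector, invoke a Bauer--Fike/Kato--Temple residual bound, and reduce to DE--Sinc interpolation error for $v$ with the Lambert-$W$ balance of discretization vs.\ truncation --- is exactly the architecture one expects the cited proof to follow, and your derivation of the mesh size~\eqref{optimal mesh size} and of the dominant exponential rate is correct.

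There is, however, a concrete slip in your accounting of the $\sqrt{\delta E}$ factor. You write that $\|\mathbf{D}^{-1}\|^{2}\le\delta$ follows from $\tilde V\ge\delta^{-1}$, but $\mathbf{D}$ has diagonal entries $\phi'(kh)$, which are entirely unrelated to $\tilde V$; the hypothesis $\tilde V\ge\delta^{-1}$ constrains $\mathbf{H}$, not $\mathbf{D}$. The correct route to $\sqrt{\delta E}$ is the energy identity for the exact eigenpair: multiplying \eqref{transformed Sturm Liouville} by $v$ and integrating gives
\[
\int |v'|^{2} + \int \tilde V\,|v|^{2} \;=\; E\int (\phi')^{2}|v|^{2},
\]
whence $\delta^{-1}\|v\|_{L^{2}}^{2}\le E\,\|\phi'v\|_{L^{2}}^{2}$, i.e.\ $\|\phi'v\|_{L^{2}}\ge(\delta E)^{-1/2}\|v\|_{L^{2}}$. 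The discrete analogue lower-bounds $\|\mathbf{D}\mathbf{v}^{*}\|$ in the denominator of the Bauer--Fike quotient and is what produces the $\sqrt{\delta E}$ prefactor. Likewise, your sentence ``$\|\mathbf{v}^{*}\|_{\mathbf{D}^{2}}\sim\sqrt{E}$'' is not the right normalization statement and should be replaced by this comparison. With that correction your sketch is sound.
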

As we can see from the results obtained in Theorem~\ref{corollary: convergence of eigenvalues}, $|\mathcal{E}-E| \to 0 $ as $N\to \infty$ for all energy eigenvalues~$E$.

\section{Anharmonic oscillators}
To implement the DE transformation, we choose a function $\phi$ which would result in the solution of~\eqref{formula: transformed Schrodinger equation} to decay doubly exponentially.

Since the anharmonic potential is analytic in $\mathbb{C}$ and grows to infinity as $x\to\pm\infty$, the wave function is also analytic in $\mathbb{C}$ and normalizable over~$\mathbb{R}$. More specifically, a simple application of WKB method to equation ~\eqref{formula: Schrodinger equation sturm liouville} with the anharmonic oscillator potential shows that $\psi(x)$ has the following decay rate at both infinities:
\begin{equation}\label{formula: wave function asymptotic}
\psi(x) = {\cal O} \left( |x|^{-m/2} \exp \left( - \dfrac{ \sqrt{c_{m}}\, |x|^{m+1}}{m+1} \right) \right) \qquad \textrm{as} \qquad |x| \to \infty.
\end{equation}
Away from both infinities, the wave function $\psi(x)$ will undergo oscillatory behavior.

As we can see from~\eqref{formula: wave function asymptotic}, the wave function $\psi(x)$ decays only single exponentially at infinities. By taking $\phi(x)=\sinh(x)$, we have:
\begin{eqnarray}\label{formula: wave function asymptotic 2}
|v(x)| & = & \left| \dfrac{\psi \circ \phi(x)}{\sqrt{\phi^{\prime}(x)}} \right| \nonumber\\
& \leq & A |\sinh(x)|^{-m/2}|\cosh(x)|^{-1/2}  \exp \left( - \dfrac{ \sqrt{c_{m}} |\sinh(x)|^{m+1}}{m+1} \right) \nonumber\\
& \leq &  A \exp \left(  - \dfrac{ \sqrt{c_{m}}}{(m+1)2^{m+1}} \exp((m+1)|x|) \right),
\end{eqnarray}
for some positive constant $A$.

From~\eqref{formula: wave function asymptotic 2}, it follow that the optimal mesh size according to Theorem~\ref{corollary: convergence of eigenvalues} is given by:
\begin{equation}\label{formula: Anharmonic optimal mesh size}
h = \dfrac{W \left( \frac{2^{m} \pi^{2}(m+1)N}{\sqrt{c_{m}}} \right)}{(m+1)N}.
\end{equation}

As will be illustrated in our numerical study, the mesh size $h$ given by~\eqref{formula: Anharmonic optimal mesh size} does not prove optimal when working with potentials with multiple wells. In these cases, we can use the principle of minimal sensitivity \cite{Amore-39-L349-06} to obtain an alternate mesh size. First, we will start by simplifying the eigensystem~\eqref{formula: matrix solution} as follows.

Applying a Cholesky factorization to the symmetric positive diagonal matrix ${\bf D}^{2}$, leads to:
\begin{equation}
{\bf D}^{2} = {\bf D}{\bf D}^{T} = {\bf D}{\bf D}.
\label{formula: det simply000}
\end{equation}
Using the above equation, we can re-write the eigensystem~\eqref{formula: matrix solution} as follows:
\begin{eqnarray}\label{formula: generalized eigenvalue problem. schrodinger}
({\bf D}^{-1}{\bf H}{\bf D}^{-1} - \mathcal{E} \, {\bf I} ){\bf z} = 0 \qquad \textrm{and} \qquad  {\bf z} = {\bf Dv}.
\end{eqnarray}
The inverse matrix ${\bf D}^{-1}$ exists since ${\bf D}^{2}$ is a diagonal positive definite matrix.

Let us denote the new matrix in~\eqref{formula: generalized eigenvalue problem. schrodinger} by ${{\bf K}} = {\bf D}^{-1}{\bf H}{\bf D}^{-1}$. Therefore,  ${{\bf K}}$ is a $(2N+1) \times (2N+1)$ matrix with entries ${K}_{j,k}$ given by:
\begin{equation}\label{formula: K components Schrodinger}
K_{j,k} =   -\left(\dfrac{1}{h^{2}\phi^{\prime}(jh) \phi^{\prime}(kh) }\right)\delta^{(2)}_{j,k} \,  + \left(\dfrac{\tilde{V}(kh) \, }{(\phi^{\prime}(kh))^2 } \right) \delta^{(0)}_{j,k} \quad {\rm with} \quad -N \leq j,k \leq N,
\end{equation}
where $\phi(x) =\sinh(x)$ and $\tilde{V}(x)$ is given by:
\begin{equation}
\tilde{V}(x) \,=\, \dfrac{1}{4} - \dfrac{3}{4} \, \sech^{2}(x) + \cosh^{2}(x) \displaystyle \sum_{i=1}^{m}c_{i}\sinh^{2i}(x).
\end{equation}

Denoting the trace of a matrix by ${\rm Tr}(\cdot)$, we have:
\begin{equation}\label{formula: Trace K}
{\rm Tr}({\bf K})(h) = \sum_{i=0}^{2N} \mathcal{E}_{i}(h),
\end{equation}
where $\{\mathcal{E}_{i}(h)\}_{i=0, \ldots, 2N}$ are the $2N+1$ eigenvalues of the matrix ${\bf K}$ or equivalently the generalized eigenvalues of the matrices ${\bf H}$ and ${\bf D}^{2}$. Note that the eigenvalues depend strongly on the mesh size $h$. Since our goal is to obtain the best approximations to these energy eigenvalues, by the principle of minimal sensitivity \cite{Amore-39-L349-06}, it seems logical to minimize their sum with respect to $h$. In other words, this alternate mesh size is given as the solution of the following optimization problem:
\begin{equation}\label{minimization problem}
\hat{h} = \arg \min_{h\in\mathbb{R^{+}}}\{ {\rm Tr}({\bf K})(h) \}.
\end{equation}

As an example, in Figure \ref{figure: traceH}, we plot ${\rm Tr}({\bf K})(h)$ with $N=20$ for the potentials of~\eqref{formula: true value energy} along with the absolute error obtained when approximating energy eigenvalues.
\begin{figure}[!ht]
\begin{center}
\begin{tabular}{cc}
\includegraphics[width=0.4\textwidth]{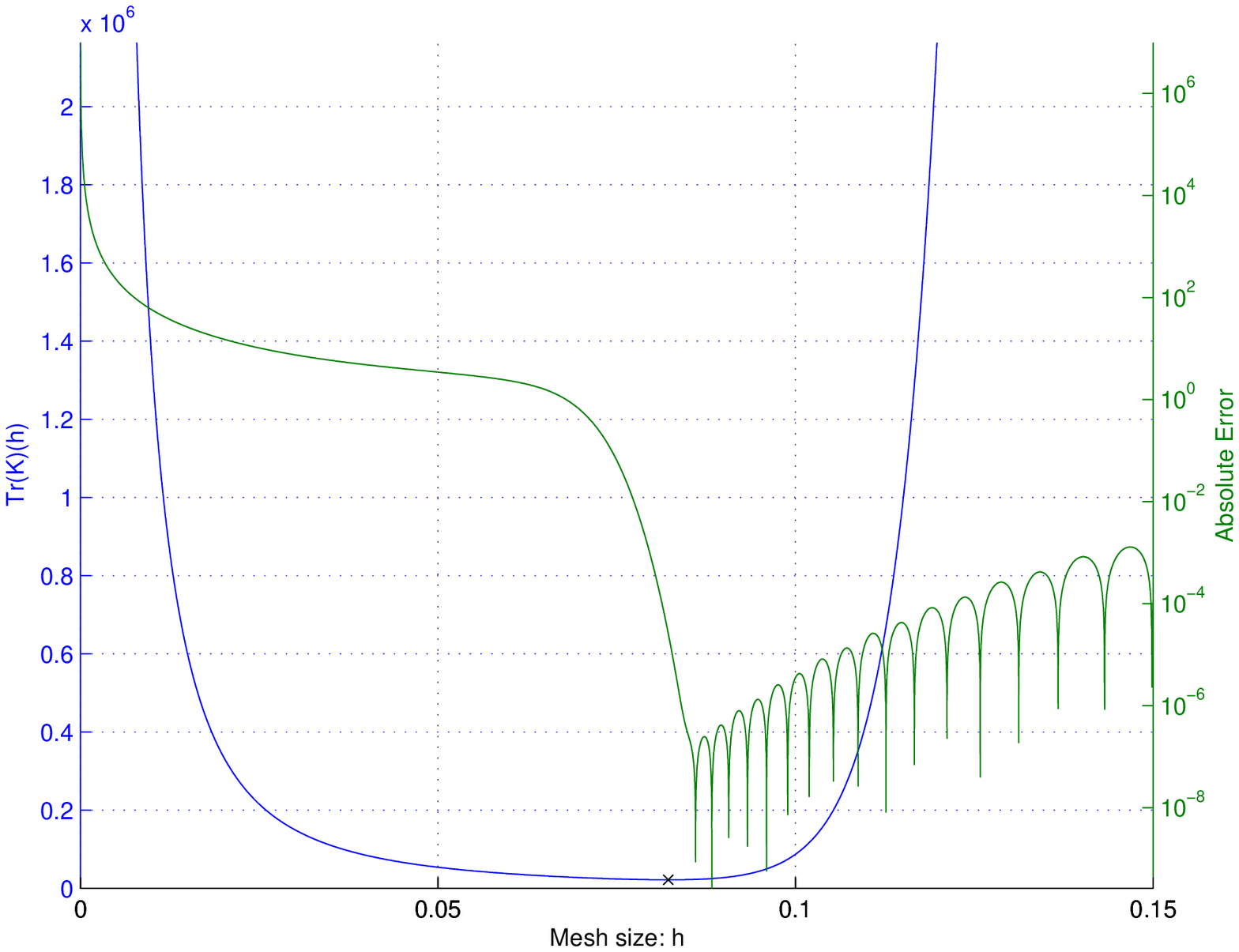}  & \includegraphics[width=0.4\textwidth]{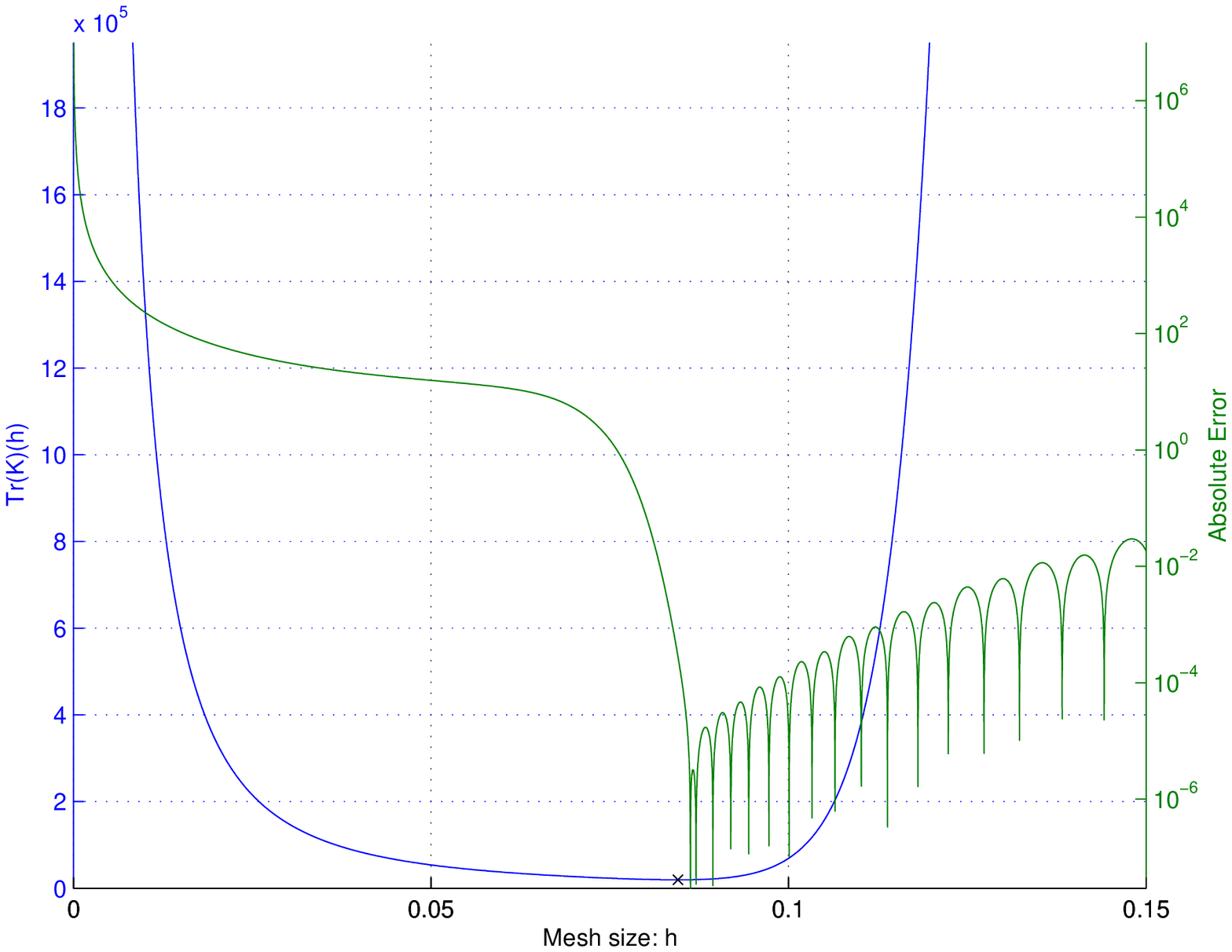} \\
 (a) & (b)\\
 \includegraphics[width=0.4\textwidth]{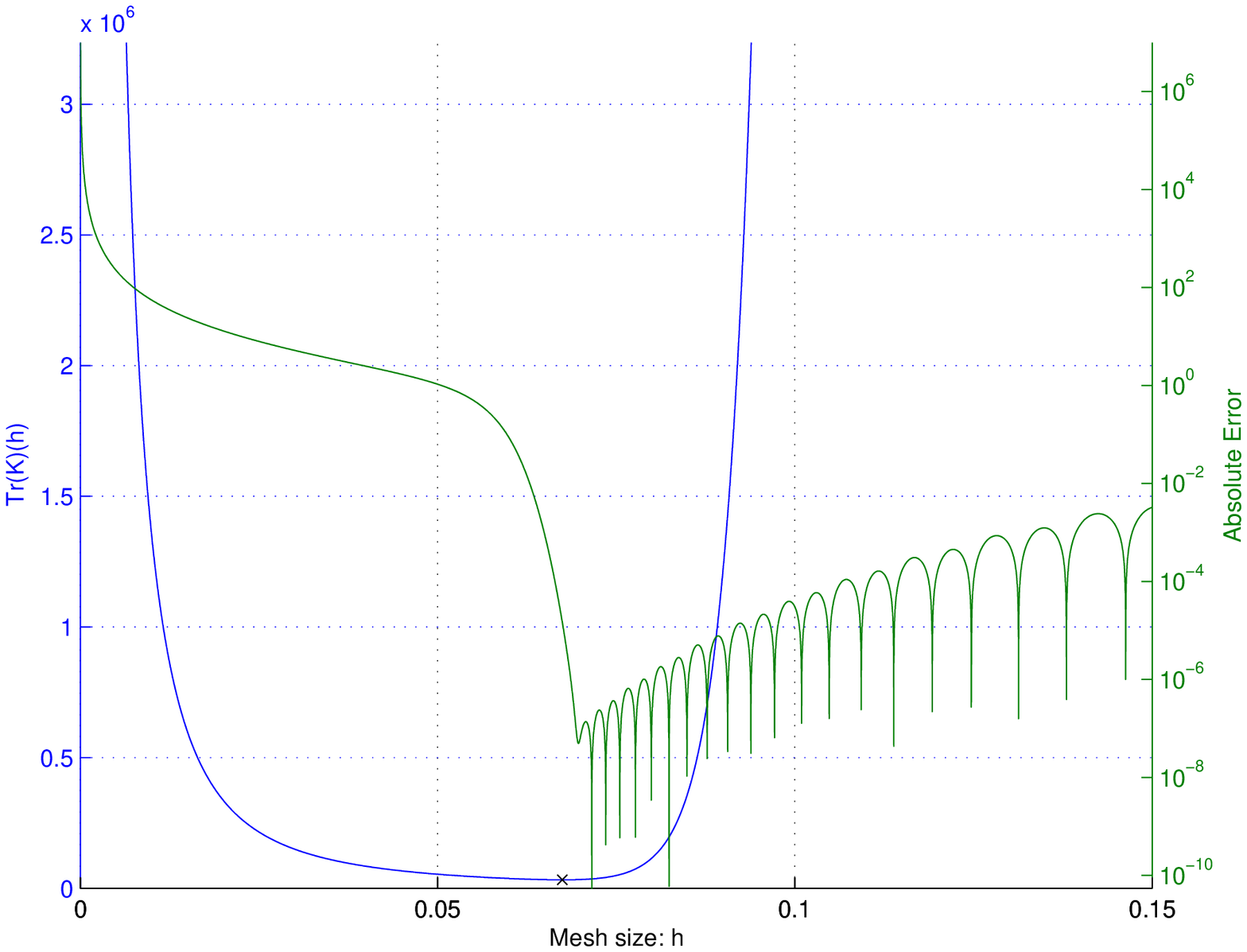}  & \includegraphics[width=0.4\textwidth]{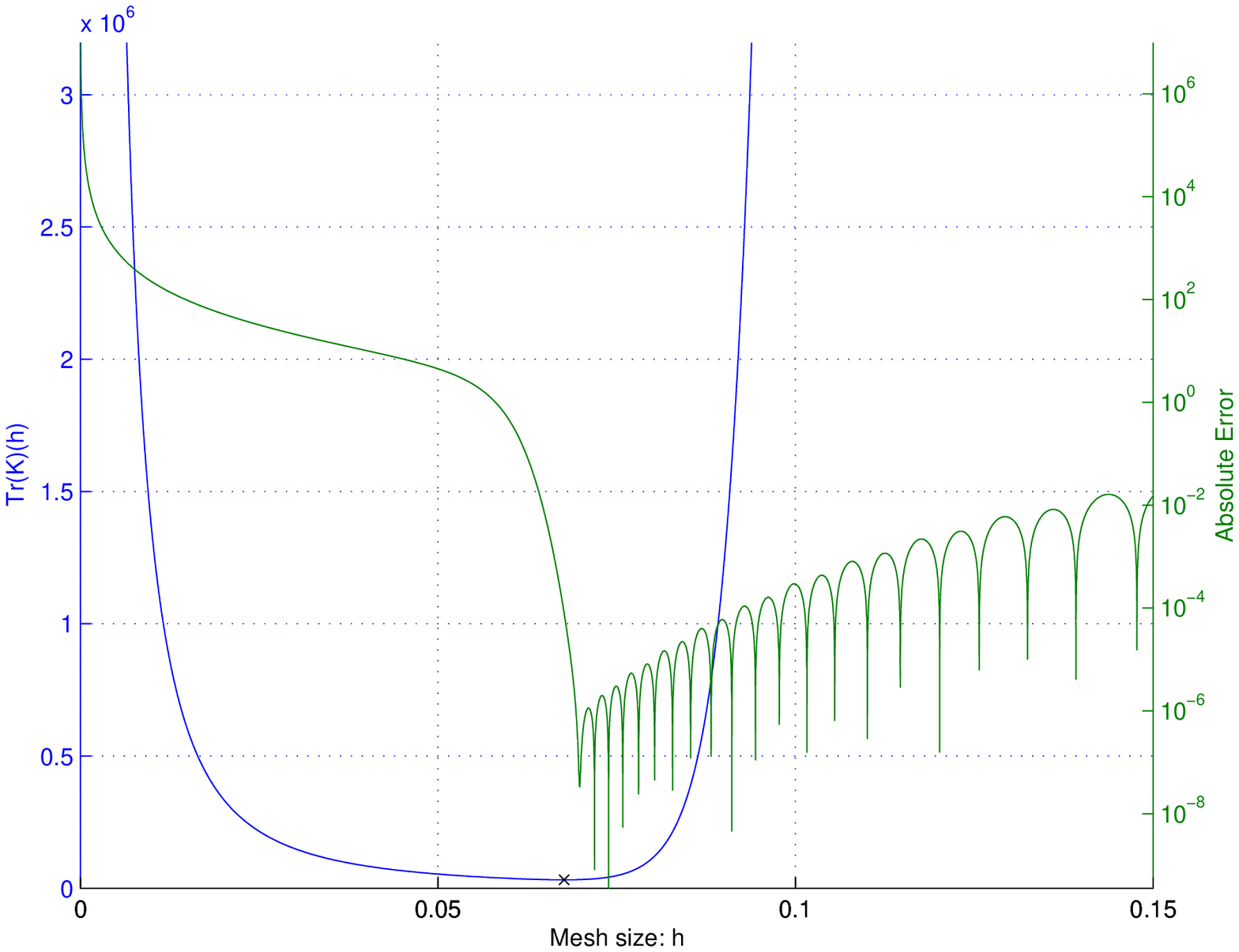} \\
 (c) & (d)
\end{tabular}
\caption{Trace and absolute error vs. $h$ for the potentials $V_{i}(x)$ for $i=1,2,3,4$ as shown in equation \eqref{formula: true value energy} with $\phi(x) = \sinh(x)$. Figure (a) shows the results for the potential $V_{1}(x)  =  x^2 -4x^4+x^6$ with exact eigenvalue $E_{0} = -2$. Figure (b) shows the results for the potential $V_{2}(x)  =  4x^2 -6x^4+x^6$ with exact eigenvalue $E_{1} = -9$. Figure (c) shows the results for the potential $V_{3}(x)  =  (105/64) x^2-(43/8)x^4 + x^6 -x^8 +x^{10}$ with exact eigenvalue $E_{0} = 3/8$. Figure (d) shows the results for the potential $V_{4}(x)  =  (169/64)x^2 -(59/8)x^4 + x^6 -x^8 + x^{10} $ with exact eigenvalue $E_{1} = 9/8$. }
\label{figure: traceH}
\end{center}
\end{figure}

To find this alternate mesh size, one would need to solve the minimization problem in equation \eqref{minimization problem}. To achieve this goal, we require the following theorem establishing the existence of such a minimum.
\begin{theorem}\label{theorem: h min}
If $\,{\bf K}$ is a matrix with components defined by equation \eqref{formula: K components Schrodinger}, $\phi(x)$ is the inverse function of the conformal map $\phi^{-1}(x)$ and $(V(x), \phi(x))\in \mathcal{X}$ where $\mathcal{X}$ is defined as the following function space:
\begin{equation}
\mathcal{X} = \left\{ (V(x) , \phi(x)) \in C(\mathbb{R})\times C^{3}(\mathbb{R}): \lim_{|x|\to \infty} \dfrac{\tilde{V}(x)}{(\phi^{\prime}(x))^{2}}  =  \infty \quad \textrm{and} \quad \phi^{\prime}(x)  >  0, \;\forall x \in  \mathbb{R} \cup \{ \pm \infty\} \right\},
\end{equation}
then for $N\geq1$, $\exists \, \hat{h} \in (0,\infty)$ such that  $ \hat{h} = \displaystyle \arg \min_{h\in\mathbb{R^{+}}}\{ {\rm Tr}({\bf K})(h) \}$.
\end{theorem}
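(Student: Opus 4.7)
The plan is to show that $\mathrm{Tr}(\mathbf{K})(h)$ is continuous on $(0,\infty)$ and blows up at both endpoints, so that the infimum is attained at some interior point by a standard compactness argument.

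First I would write out the trace explicitly. Because $\delta^{(0)}_{j,k} = \delta_{j,k}$ and $\delta^{(2)}_{k,k} = -\pi^{2}/3$, equation \eqref{formula: K components Schrodinger} yields
\begin{equation*}
\mathrm{Tr}(\mathbf{K})(h) = \sum_{k=-N}^{N} \frac{1}{(\phi'(kh))^{2}}\left[\frac{\pi^{2}}{3h^{2}} + \tilde{V}(kh)\right].
\end{equation*}
Under the hypotheses $V \in C(\mathbb{R})$ and $\phi \in C^{3}(\mathbb{R})$ with $\phi'(x) > 0$ everywhere, the function $\tilde{V}(x)$ (which involves $V$, $\phi$, and derivatives of $\phi$ through second order) is continuous, and $1/(\phi'(x))^{2}$ is continuous. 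Consequently $h \mapsto \mathrm{Tr}(\mathbf{K})(h)$ is continuous on $(0,\infty)$.

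Next I would verify that $\mathrm{Tr}(\mathbf{K})(h) \to \infty$ at both endpoints. As $h \to 0^{+}$, isolate the $k=0$ term: $\frac{1}{(\phi'(0))^{2}}\left[\frac{\pi^{2}}{3h^{2}} + \tilde{V}(0)\right]$, which diverges to $+\infty$ since $\phi'(0) > 0$. All remaining terms $k \neq 0$ are nonnegative for sufficiently small $h$ (because $\pi^{2}/(3h^{2})$ eventually dominates $|\tilde{V}(kh)|$ pointwise), so the full trace blows up. As $h \to \infty$, for any fixed $k \neq 0$ (such $k$ exists since $N \geq 1$) we have $|kh| \to \infty$, and the hypothesis $\lim_{|x|\to\infty} \tilde{V}(x)/(\phi'(x))^{2} = \infty$ forces $\tilde{V}(kh)/(\phi'(kh))^{2} \to \infty$. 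The remaining summands in $\mathrm{Tr}(\mathbf{K})(h)$ are bounded below (either vanishing with $h$ or tending to constants/infinity), hence the trace again tends to $+\infty$.

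Combining continuity with the two limiting behaviors, I can fix any $h_{0} \in (0,\infty)$ and choose $0 < \alpha < h_{0} < \beta < \infty$ large/small enough that $\mathrm{Tr}(\mathbf{K})(h) > \mathrm{Tr}(\mathbf{K})(h_{0})$ for all $h \in (0,\alpha]\cup[\beta,\infty)$. Then $\mathrm{Tr}(\mathbf{K})$ is continuous on the compact interval $[\alpha,\beta]$, so by the extreme value theorem it attains its minimum at some $\hat{h} \in [\alpha,\beta]$, and by construction this is a global minimum on $(0,\infty)$. I anticipate the main technical care is in the $h\to 0^{+}$ argument: one must either control the sign of the off-diagonal (in $k$) terms or bound their negative parts uniformly so that the blow-up of the $k=0$ term is not cancelled. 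A clean way to do this is to note that for $h$ sufficiently small, $\tilde{V}(kh)$ is uniformly bounded over $k \in \{-N,\ldots,N\}$ by continuity on a compact set, while $\pi^{2}/(3h^{2})$ grows without bound, making every bracket eventually positive. Everything else is routine application of continuity and the stated growth hypothesis.
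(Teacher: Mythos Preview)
Your proposal is correct and follows essentially the same approach as the paper: write the trace explicitly using $\delta^{(2)}_{k,k}=-\pi^{2}/3$, establish continuity on $(0,\infty)$ from the regularity hypotheses, show $\mathrm{Tr}(\mathbf{K})(h)\to\infty$ as $h\to0^{+}$ and as $h\to\infty$, and then invoke the extreme value theorem on a compact subinterval. Your treatment of the $h\to0^{+}$ limit (bounding $\tilde V(kh)$ uniformly on the compact set $\{kh:|k|\le N\}$ so that $\pi^{2}/(3h^{2})$ dominates every bracket) is in fact a bit more careful than the paper's, which simply collapses the sums to $\infty\times(2N+1)/(\phi'(0))^{2}+(2N+1)\tilde V(0)/(\phi'(0))^{2}$.
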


\begin{proof}
The trace of the matrix ${\bf K}$ is given by:
\begin{eqnarray}\label{formula: trace}
\displaystyle {\rm Tr}({\bf K})(h) & = & \dfrac{\pi^2}{3h^{2}} \sum_{k=-N}^{N}\dfrac{1}{(\phi^{\prime}(kh))^{2}}  +   \sum_{k=-N}^{N} \dfrac{\tilde{V}(kh)}{(\phi^{\prime}(kh))^{2}}.
\end{eqnarray}

The function ${\rm Tr}({\bf K})(h) $ is continuous on the interval $(0,\infty)$ because it is composed of continuous functions and $\phi^{\prime}(x) > 0,  \, \, \forall x \in  \mathbb{R}$ by assumption. In addition, the function $\tilde{V}(x)$ is bounded when $x=0$ using this same assumption.

Taking the limit as $h \to 0^{+}$, we obtain:
\begin{eqnarray}
\displaystyle \lim_{h\to 0^{+}} {\rm Tr}({\bf K})(h) & = & \displaystyle \lim_{h\to 0^{+}} \dfrac{\pi^2}{3h^{2}} \displaystyle \sum_{k=-N}^{N} \dfrac{1}{(\phi^{\prime}(kh))^{2}} + \lim_{h\to 0^{+}} \displaystyle \sum_{k=-N}^{N} \dfrac{\tilde{V}(kh)}{(\phi^{\prime}(kh))^{2}}
\nonumber \\  & = & \infty \times \dfrac{(2N+1)}{(\phi^{\prime}(0))^{2}} + \displaystyle  \dfrac{(2N+1)\tilde{V}(0)}{(\phi^{\prime}(0))^{2}}
\nonumber \\ & = & \infty.
\end{eqnarray}

Taking the limit as $h \to \infty$, we obtain:
\begin{eqnarray}
\displaystyle \lim_{h\to \infty } {\rm Tr}({\bf K})(h) & = & \displaystyle \lim_{h\to \infty} \displaystyle  \sum_{k=-N}^{N} \left( \dfrac{\pi^2}{3h^{2}} \dfrac{1}{(\phi^{\prime}(kh))^{2}} \right) + \lim_{h\to \infty} \displaystyle  \sum_{k=-N}^{N} \dfrac{\tilde{V}(kh)}{(\phi^{\prime}(kh))^{2}}
\nonumber \\ & & \hskip -2.5cm = \, 0+  \dfrac{\tilde{V}(0)}{\rho(\phi(0))(\phi^{\prime}(0))^{2}}
+ \, N \times \left( \displaystyle \lim_{x\to \infty }  \dfrac{\tilde{V}(x)}{(\phi^{\prime}(x))^{2}} + \displaystyle \lim_{x\to - \infty }  \dfrac{\tilde{V}(x)}{(\phi^{\prime}(x))^{2}} \right)
\nonumber \\ & & \hskip -2.5cm = \, \infty.
\end{eqnarray}

Since:
$$
\lim_{h\to 0^{+}} {\rm Tr}({\bf K})(h) =  \lim_{h\to \infty } {\rm Tr}({\bf K})(h) = \infty,
$$
and the function ${\rm Tr}({\bf K})(h)$ is continuous on the interval $(0,\infty)$, by the Weierstrass extreme value theorem, $\exists \, \hat{h} \in (0,\infty)$ such that $ \hat{h} = \displaystyle \arg \min_{h\in\mathbb{R^{+}}}\{ {\rm Tr}({\bf K})(h) \}$.
\end{proof}

By construction, we already know that ${\bf K}$ is a symmetric matrix. However, without loss of generality, we can also assume the matrix ${\bf K}$ to be positive definite by the following demonstration.

Given a potential of the form in~\eqref{formula: anharmonic oscillator}, it is possible to find a constant $\Omega > 0$ such that $\tilde{V}(x) > -\Omega\cosh^{2}(x)$ for all $x \in \mathbb{R}$. Consequently, we can rewrite~\eqref{formula: transformed Schrodinger equation} as follows:
\begin{align}
- v^{\prime \prime}(x) + \tilde{V}(x) v(x)  & =   E \cosh^{2}(x) v(x) \nonumber\\
&   \Longrightarrow \; - v^{\prime \prime}(x) + \tilde{V}(x)v(x) + \Omega \cosh^{2}(x)v(x)  \,=\, E \cosh^{2}(x) v(x) + \Omega \cosh^{2}(x)v(x) \nonumber\\
&   \Longrightarrow \; - v^{\prime \prime}(x) + (\tilde{V}(x) + \Omega \cosh^{2}(x))v(x)  \,=\, (\Omega + E) \cosh^{2}(x) v(x) \nonumber\\
&   \Longrightarrow \; - v^{\prime \prime}(x) + \hat{V}(x)v(x)  \,=\,  \hat{E} \cosh^{2}(x) v(x),
\end{align}
where $\hat{V}(x) = \tilde{V}(x) + \Omega\cosh^{2}(x)>0$ and $\hat{E}= \Omega + E$.

Since $\hat{V}(x)>0$ for all $x \in \mathbb{R}$, the matrix ${\bf H}$ resulting from the DESCM will be positive definite. Consequently, the matrix ${\bf K}$ will also be positive definite. With this in mind, all the assumptions of Theorem~\ref{corollary: convergence of eigenvalues} are satisfied. Hence, the eigenvalues of the problem~\eqref{formula: generalized eigenvalue problem. schrodinger} converge to the eigenvalues of~\eqref{formula: Schrodinger equation sturm liouville}.

\section{Numerical discussion}
In this section, we present numerical results for the energy values of anharmonic oscillator potentials.

All calculations are performed using the programming language Julia~\cite{Bezanson2012} in double precision. The eigenvalue solvers in Julia utilize the linear algebra package {\it LAPACK}~\cite{laug}. Unless otherwise stated, the mesh size $h$~\eqref{formula: Anharmonic optimal mesh size} is used in all calculations.

In certain cases, we use the mesh size $\hat{h}$ obtained by solving the optimization problem in~\eqref{minimization problem}. In these cases, we use the optimization Julia package {\it Optim} \cite{White2012}. The matrix ${\bf K}$ is constructed using~\eqref{formula: K components Schrodinger}.

In \cite{Chaudhuri1991a}, Chaudhuri et al. presented several potentials which had known analytic solutions for energy levels calculated using supersymmetric quantum mechanics, namely:
\begin{equation}\label{formula: true value energy}
\begin{array}{lllll}
V_{1}(x) & = & x^2 -4x^4+x^6  & \Rightarrow &  E_{0} = -2 \\
V_{2}(x) & = & 4x^2 -6x^4+x^6 & \Rightarrow &  E_{1} = -9 \\
V_{3}(x) & = & (105/64) x^2-(43/8)x^4 + x^6 -x^8 +x^{10} &\Rightarrow &  E_{0} = 3/8 \\
V_{4}(x) & = & (169/64)x^2 -(59/8)x^4 + x^6 -x^8 + x^{10}   &\Rightarrow &  E_{1} = 9/8.
\end{array}
\end{equation}

Using these exact values, we present Figure~\ref{figure: potentials} to illustrate the convergence of the DESCM. Figure~\ref{figure: potentials} shows the absolute error between our approximation and the exact values given in~\eqref{formula: true value energy}. The absolute error is defined by:
\begin{equation}
{\rm Absolute \, \, error} = \left| \mathcal{E}_{l}(N) - {\rm Exact \, \, value} \right| \qquad \textrm{for} \qquad l =0,1.
\end{equation}

As can be seen from Figure~\ref{figure: potentials}, the approximations obtained using DESCM converge quite well.

In Tables~\ref{TABLE: 1} and \ref{TABLE: 2}, we present approximations of energies for the ground state and first two excited states for two different potentials with unknown energy eigenvalues. There appears to be convergence in all cases.

Tables~\ref{TABLE: 3},~\ref{TABLE: 4},~\ref{TABLE: 5}, and~\ref{TABLE: 6} display the ground state energy for various potentials as well as an approximation to the absolute error. In these tables, the approximation to the absolute error is given by:
\begin{equation}\label{formula: approximate relative error}
\epsilon_{n}(N) = \left | \mathcal{E}_{n}(N-1)-\mathcal{E}_{n}(N) \right | \qquad \textrm{for} \qquad N = 2,3,4,\ldots \quad \textrm{and} \quad n = 0,1,2,\ldots.
\end{equation}
In all four of these tables, we use $\epsilon_{0}(N)<5 \times 10^{-12}$ as a stopping criterion.

Table \ref{TABLE: 3} displays values obtained for the potential $V(x)=c_1 x^2 + c_2 x^{4}$ for different values of $c_1$ and $c_2$.
Table \ref{TABLE: 4} displays values obtained for the potential $V(x)=c_1 x^2 + c_2 x^{4}+c_3 x^{6}$ for different values of $c_1$, $c_2$ and $c_3$.
Table \ref{TABLE: 5} displays values obtained for the potential $V(x)=c_1 x^2 + c_2 x^{4}+c_3 x^{6} + c_4x^{8} $ for different values of $c_1$, $c_2$, $c_3$ and $c_4$. Table \ref{TABLE: 6} displays values obtained for the potential $V(x)=c_1 x^2 + c_2 x^{4}+c_3 x^{6} + c_4x^{8} +c_{5} x^{10}$ for different values of $c_1$, $c_2$, $c_3$, $c_4$ and $c_5$. In all these tables, the numbers between parentheses represent powers of ten.

In general, the DESINC method performs well when using the optimal mesh size $h$ for low oscillatory potentials. However, as the number of oscillations increase in the potential, the mesh size $h$ performs significantly less than the step size $\hat{h}$. To illustrate this claim, we present in Figure~\ref{figure: Multiple wells step size} three potentials with three, five and ten wells respectively. The first row in Figure~\ref{figure: Multiple wells step size} illustrates the step sizes used and the convergence of the DESINC method for the three well potential $V_{2}(x)  =  4x^2 -6x^4+x^6$ in \eqref{formula: true value energy} with exact eigenvalue $E_{1} = -9$.  The second row in Figure \ref{figure: Multiple wells step size} displays the step sizes used and the convergence of the DESINC method for the five well potential $V(x)  =  T_{10}(x)-1$, where $T_{10}(x)$ is the $10^{\textrm{th}}$ Chebyshev polynomial. Finally the third row in Figure~\ref{figure: Multiple wells step size} displays the step sizes used and the convergence of the DESINC method for the ten well potential $V(x)  =  T_{20}(x)-1$, where $T_{20}(x)$ is the $20^{\textrm{th}}$ Chebyshev polynomial.

In Figure~\ref{figure: Multiple wells}, we implemented our algorithm with the step size $\hat{h}$ for the ten well potential:
\begin{eqnarray}\label{formula: ten well potential}
V(x) & = &  T_{20}(x)-1,
\end{eqnarray}
for $N=1,2,\ldots,1000$.

Using the stopping criterion $\epsilon_{n}(N)<5 \times 10^{-12}$, we were able to find an approximation to 1353 eigenvalues of this ten well potential.

In~\cite{Weniger-246-133-96}, Weniger uses a Rayleigh-Schr\"odinger perturbation series and sequence transformations to evaluate the ground state of the potential $V(x) = x^2 + x^{4}$ to high accuracy. More specifically, Weniger uses the exact rational arithmetics of Maple with an accuracy of 300 decimal digits to obtain the following value:
\begin{equation}\label{formula: ground Weniger}
E_{0} \approx 1.392~351~641~ 530~ 291~ 855~ 657~ 507~ 876~ 609~ 934~ 184~ 600~ 066~ 711~ 9.
\end{equation}

We used Maple16\textsuperscript{\tiny{TM}} to implement our algorithm for the same potential with an accuracy of 100 correct digits and we obtain:
\begin{eqnarray}\label{formula: ground Us}
E_{0} & \approx & 1.392~ 351~ 641~ 530~ 291~ 855~ 657~ 507~ 876~ 609~ 934~ 184~ 600~ 066~ 711~ 220~ 834~
\nonumber\\ &  & 088~ 906~ 349~ 323~ 877~ 567~ 431~ 875~ 646~ 528~ 590~ 973~ 563~ 467~ 791~ 759~ 121,
\end{eqnarray}
which is in excellent agreement with Weniger's value.

\section{Conclusion}
Various methods have been used to calculate the energy eigenvalues of quantum anharmonic oscillators given a specific set of parameters. While several of these methods yield excellent results for specific cases, there is a beautiful diversity yet lack of uniformity in the resolution of this problem. In this work, we present a method based on the DESCM where the wave function of a transformed Schr\"odinger equation \eqref{formula: transformed Schrodinger equation} is approximated by as a Sinc expansion. By summing over $2N+1$ collocation points, we construct a symmetric positive definite matrices ${\bf K}$ whose eigenvalues are approximations to the energy eigenvalues of~\eqref{formula:Schrodinger equation}. The DESCM method has a convergence rate of ${\cal O}\left(\left(\frac{N^{5/2}}{\log(N)^{2}} \right)  \exp \left(- \kappa \frac{N}{\log(N)} \right) \right)$.  The convergence is improved for potential with multiple wells by using the alternate mesh size obtained by minimizing the trace of the discretized Hamiltonian.

The numerical results obtained for a number of different potentials including potentials with multiple wells, show clearly the efficiency and accuracy of the proposed method.

\section*{Acknowledgment}

The corresponding author (HS) acknowledges the financial support for this research by the Natural Sciences and Engineering Research Council of Canada~(NSERC).

\section{Tables and Figures}

\clearpage

\begin{figure}[!ht]
\begin{center}
\begin{tabular}{cc} \includegraphics[width=0.35\textwidth]{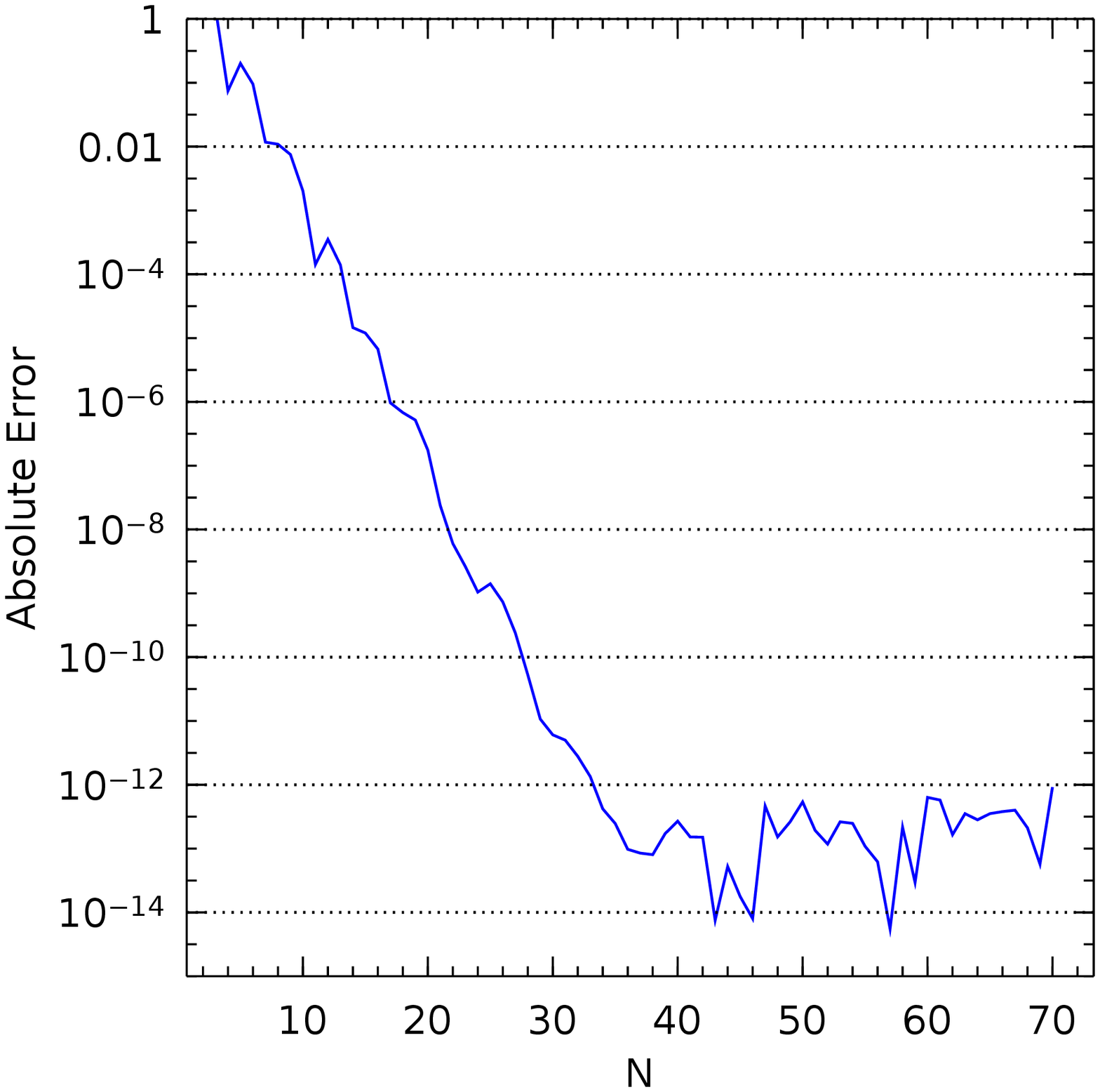} &  \includegraphics[width=0.35\textwidth]{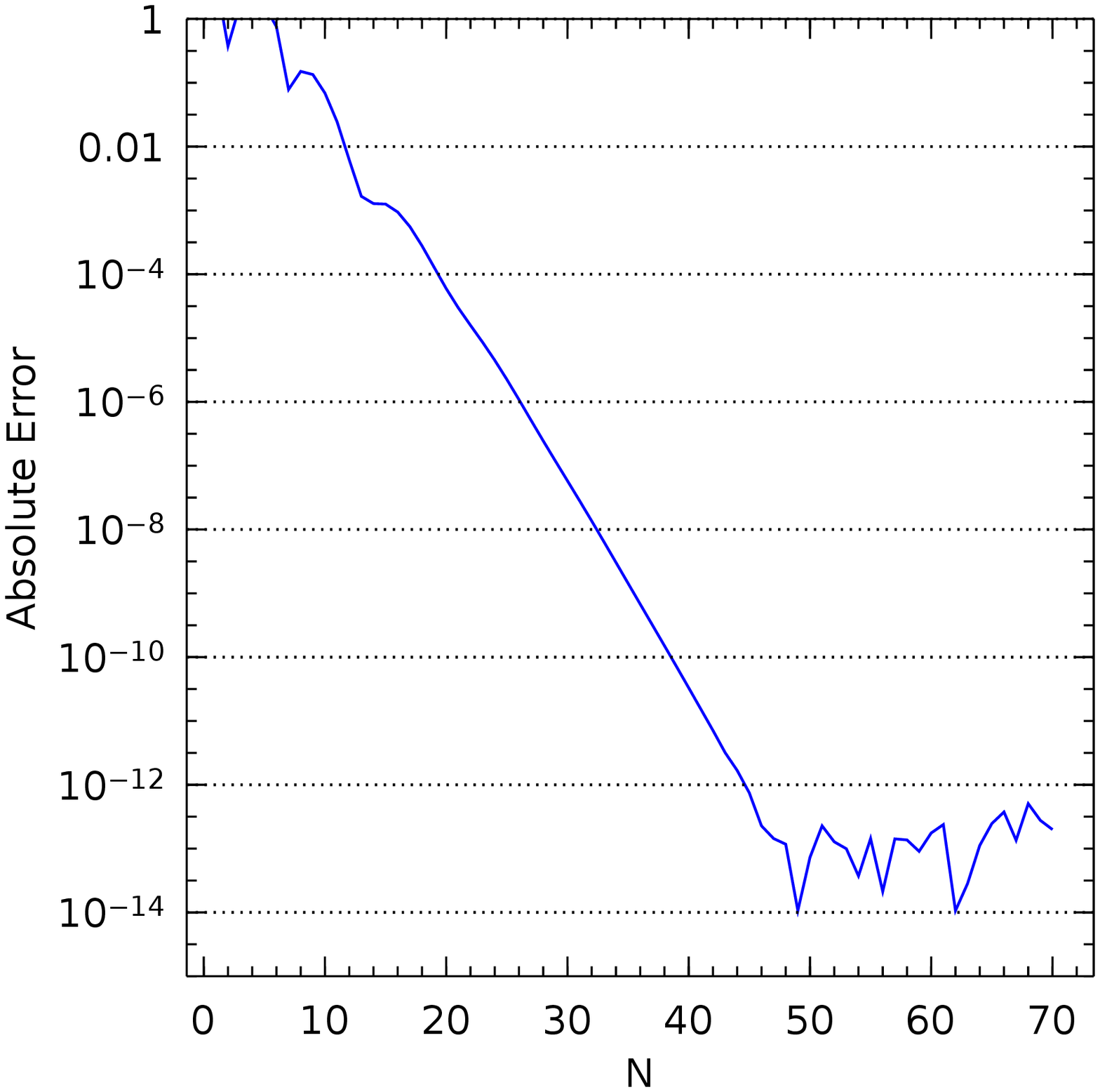} \\
(a) & (b) \\
\includegraphics[width=0.35\textwidth]{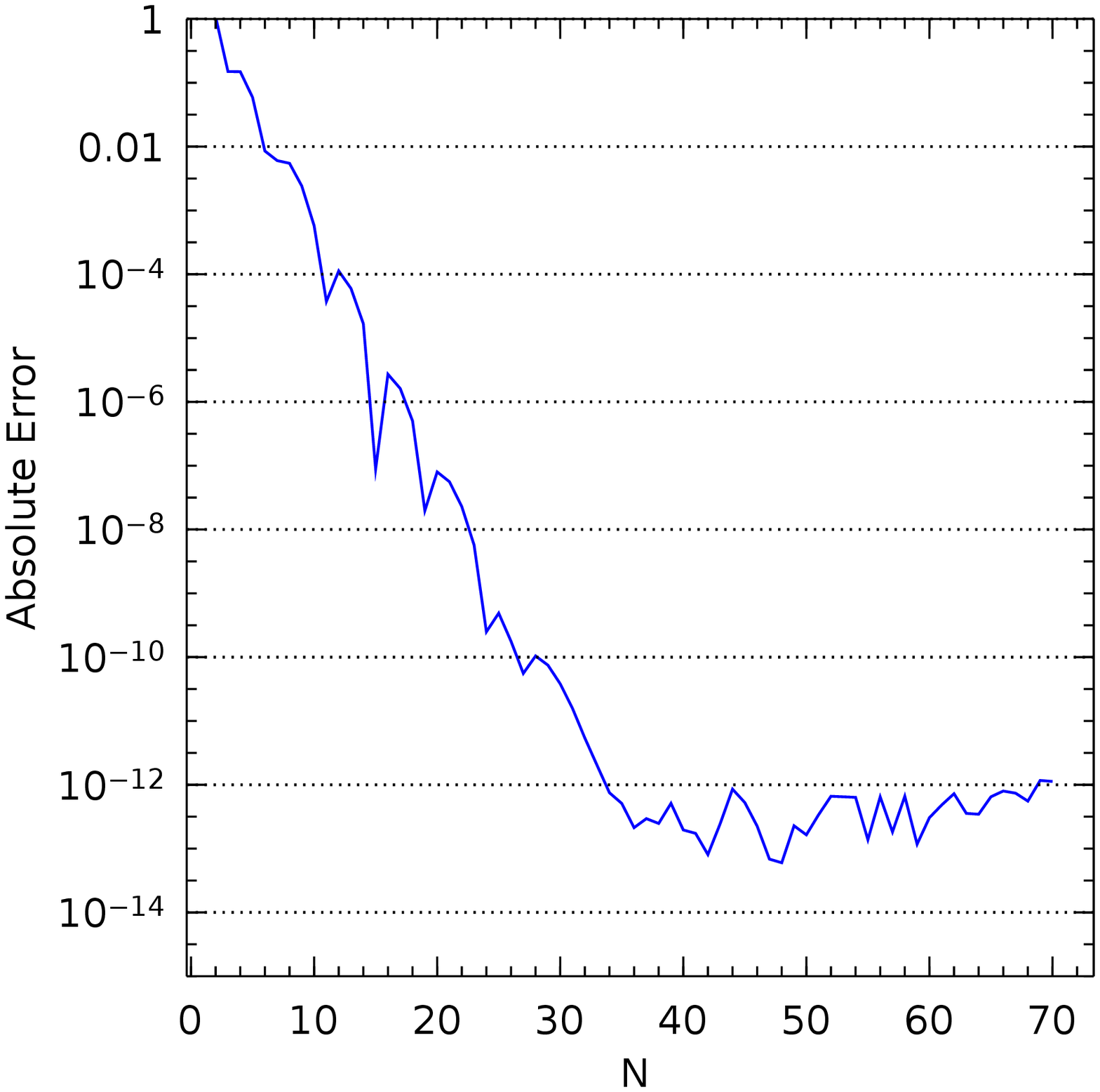} &  \includegraphics[width=0.35\textwidth]{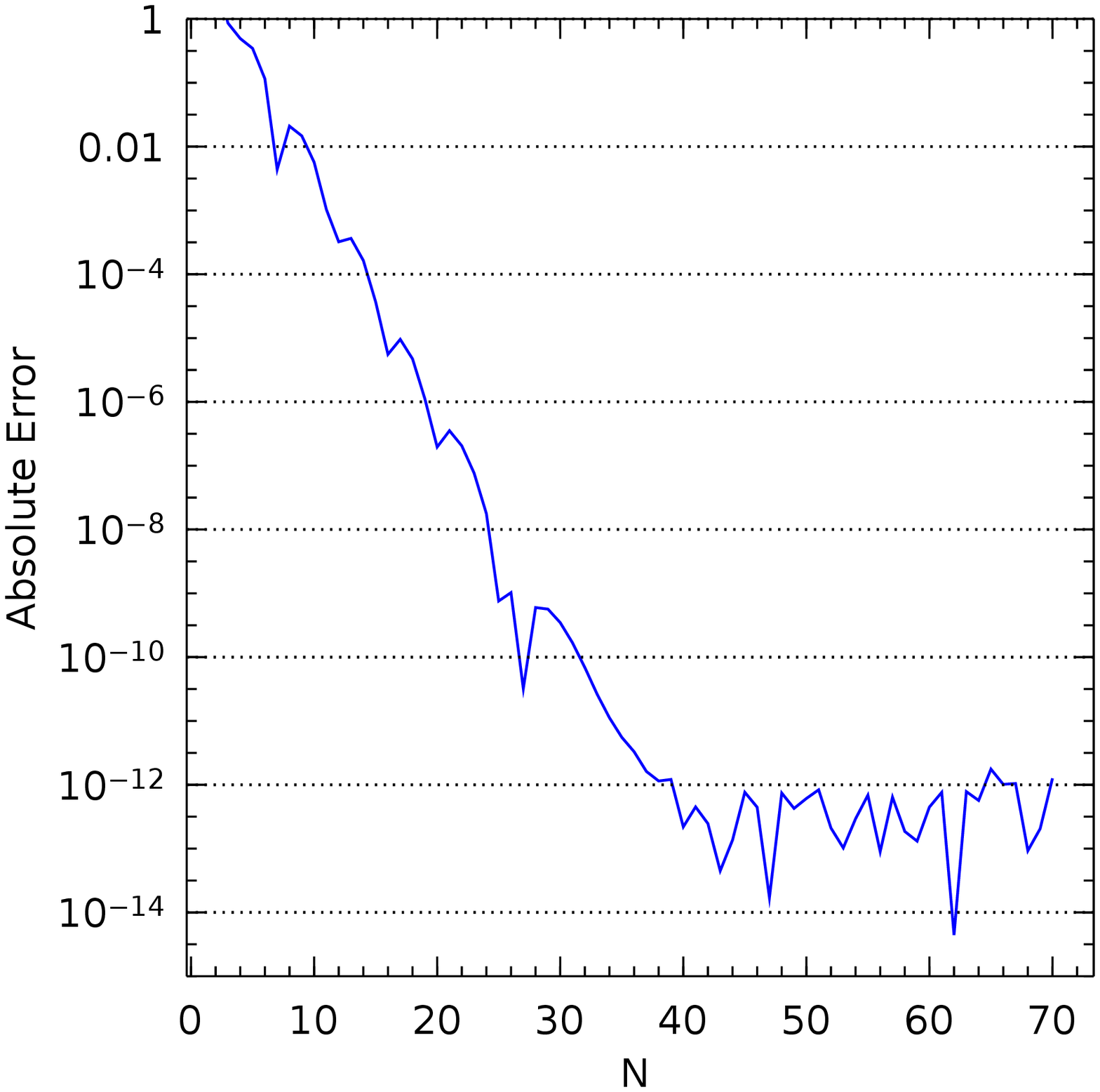} \\
(c) & (d)
\end{tabular}
\caption{Absolute error for the potentials $V_{i}(x)$ for $i=1,2,3,4$ given by~\eqref{formula: true value energy} with $\phi(x) = \sinh(x)$. \newline
(a)~$V_{1}(x) =  x^2 -4x^4+x^6$ with exact eigenvalue $E_{0} = -2$. (b)~$V_{2}(x)  =  4x^2 -6x^4+x^6$ with exact eigenvalue $E_{1} = -9$. (c)~$V_{3}(x)  =  (105/64) x^2-(43/8)x^4 + x^6 -x^8 +x^{10}$ with exact eigenvalue $E_{0} = 3/8$. (d)~$V_{4}(x)  =  (169/64)x^2 -(59/8)x^4 + x^6 -x^8 + x^{10} $ with exact eigenvalue $E_{1} = 9/8$.}
\label{figure: potentials}
\end{center}
\end{figure}

\begin{table}[!ht]
\caption{Energies for the ground state and first two excited states for $V(x) = -x^2 + 3 x^4 -2 x^6 +0.1 x^{10}.$ }
\begin{tabular*}{\hsize}{@{\extracolsep{\fill}}cccc} \sphline
  $N$ &   $\mathcal{E}_{0}(N)$        &  $\mathcal{E}_{1}(N)$        &      $\mathcal{E}_{2}(N)$   \\\hline
   ~5 & -0.183054938746611~ & 0.441479870018253  &  2.620487757023682\\
   10 & -0.0976947154532108 & 0.670920848438211  &  3.112803149372351\\
   15 & -0.0962838618463357 & 0.672983395806946  &  3.110900059783247\\
   20 & -0.0962939179110841 & 0.672989564944146  &  3.111020042497232\\
   25 & -0.0962917320927764 & 0.672993682058299  &  3.111022843861247\\
   30 & -0.0962919468261398 & 0.672993241672601  &  3.111022328272051\\
   35 & -0.0962919458832259 & 0.672993243476173  &  3.111022329656410\\
   40 & -0.0962919462260392 & 0.672993242754209  &  3.111022328736961\\
   45 & -0.0962919462302011 & 0.672993242746560  &  3.111022328725989\\
   50 & -0.0962919462309655 & 0.672993242745170  &  3.111022328724715\\ \hline
\end{tabular*}
\label{TABLE: 1}
\end{table}

\begin{table}[!ht]
\caption{Energies for the ground state and first two excited states for $V(x) = x^2 + 100x^{8}$. }
\begin{tabular*}{\hsize}{@{\extracolsep{\fill}}cccc} \sphline
  $N$ &      $\mathcal{E}_{0}(N)$    &  $\mathcal{E}_{1}(N)$        &      $\mathcal{E}_{2}(N)$   \\\hline
   ~3 & 3.18583889990311 & 12.1774056576440  &  25.9667305118017\\
   ~6 & 3.18865215097014 & 12.1950090976147  &  26.0334131709351\\
   ~9 & 3.18865434610824 & 12.1950219328947  &  26.0334583310462\\
   12 & 3.18865434649856 & 12.1950219336715  &  26.0334583214430\\
   15 & 3.18865434649231 & 12.1950219336306  &  26.0334583212540\\
   18 & 3.18865434649241 & 12.1950219336298  &  26.0334583212524\\
   21 & 3.18865434649213 & 12.1950219336305  &  26.0334583212523\\
   24 & 3.18865434649426 & 12.1950219336305  &  26.0334583212539\\
   27 & 3.18865434649200 & 12.1950219336299  &  26.0334583212526\\
   30 & 3.18865434649236 & 12.1950219336314  &  26.0334583212516\\ \hline
\end{tabular*}
\label{TABLE: 2}
\end{table}

\begin{table}[!ht]
\caption{The ground state energy for $V(x) = c_1 x^2 + c_2 x^{4}$. }
\begin{tabular*}{\hsize}{@{\extracolsep{\fill}}ccccc} \sphline
   $c_{1}$ & $c_{2}$ & $N$&  $\mathcal{E}_{0}(N)$   &  $\epsilon_{0}(N)$   \\\hline
   ~0.1     & 0.1 &  20 & 5.6694532770815997(-1) &	 1.6(-12) \\
   ~0.1     & ~~1 &  18 & 1.0962243662319233(~0) &	 2.3(-12) \\
   ~~~1     & ~~1 &  17 & 1.3923516415352821(~0) &	 2.5(-12) \\
   ~~~1     & ~10 &  17 & 2.4491740721179220(~0) &	 8.8(-14) \\
   ~~10     & ~10 &  15 & 3.7029004216662731(~0) &	 4.0(-13) \\ \hline
   -0.1     & 0.1 &  21 & 4.1046961591503783(-1) &	 2.6(-12) \\
   -0.1     & ~~1 &  18 & 1.0238094432848113(~0) &	 4.7(-13) \\
   ~~-1     & ~~1 &  19 & 6.5765300518294945(-1) &	 5.4(-14) \\
   ~~-1     & ~10 &  17 & 2.1128778980507850(~0) &	 7.1(-13) \\
   ~-10     & ~10 &  19 & 9.0479065692642441(-2) &	 1.7(-12) \\ \hline
\end{tabular*}
\label{TABLE: 3}
\end{table}

\begin{table}[!ht]
\caption{The ground state energy for $V(x) = c_1 x^2 + c_2 x^{4}+c_3 x^{6}$. }
\begin{tabular*}{\hsize}{@{\extracolsep{\fill}}cccccc} \sphline
   $c_{1}$ & $c_{2}$ & $c_{3}$ & $N$&  $\mathcal{E}_{0}(N)$          &  $\epsilon_{0}(N)$   \\\hline
   ~0.1     & ~0.1     &   0.1   & 23 &	 7.6469531499643029(-1) &	 4.2(-13) \\
   ~~~1     & ~~~1     &   ~~1   & 20 &	 1.6148940820343036(~0) &	 1.6(-12) \\
   ~0.1     & ~~~1     &   ~10   & 19 &	 2.1277742176946535(~0) &	 3.7(-12) \\
   ~~~1     & ~~10     &   ~10   & 17 &	 2.7940871778594101(~0) &	 3.3(-12) \\
   ~~10     & ~~10     &   ~10   & 16 &	 3.8948206179865981(~0) &	 2.5(-12) \\ \hline
   -0.1     & ~0.1     &   0.1   & 23 &	 6.6383017274207901(-1) &	 2.0(-12) \\
   ~~~1     & ~~-1     &   ~~1   & 23 &	 1.2022669303165900(~0) &	 8.0(-13) \\
   -0.1     & ~~-1     &   ~10   & 20 &	 1.9385567907196897(~0) &	 2.7(-13) \\
   ~~-1     & ~~10     &   ~10   & 17 &	 2.5157308558338656(~0) &	 2.3(-12) \\
   ~~10     & ~-10     &   ~10   & 20 &	 2.9588710692969618(~0) &	 1.9(-12) \\\hline
\end{tabular*}
\label{TABLE: 4}
\end{table}

\begin{table}[!ht]
\caption{The ground state energy for $V(x) = c_1 x^2 + c_2 x^{4}+c_3 x^{6} + c_{4} x^{8}$. }
\begin{tabular*}{\hsize}{@{\extracolsep{\fill}}ccccccc} \sphline
   $c_{1}$ & $c_{2}$ & $c_{3}$ & $c_{4}$ & $N$&  $\mathcal{E}_{0}(N)$          &  $\epsilon_{0}(N)$   \\\hline
   ~0.1     & ~0.1     &   ~0.1   &   0.1  & 23 &	~9.2287072386834434(-1) &	 3.0(-13) \\
   ~0.1     & ~~~1     &   ~~10   &   ~10  & 21 &	~2.3988345516957166(~0) &	 2.2(-12) \\
   ~~~1     & ~~~1     &   ~~10   &   ~10  & 21 &	~2.5285749972092857(~0) &	 2.2(-12) \\
   ~~~1     & ~~10     &   ~~10   &   ~10  & 20 &	~2.9458972541841404(~0) &	 9.8(-13) \\
   ~~10     & ~~10     &   ~~10   &   ~10  & 19 &	~3.9840271957255702(~0) &	 3.1(-12) \\\hline
   -0.1     & ~0.1     &   -0.1   &   0.1  & 27 &	~6.9423980434904176(-1) &	 1.6(-12) \\
   ~0.1     & ~~-1     &   ~~10   &   ~10  & 22 &	~2.2867765902246440(~0) &	 1.0(-12) \\
   ~~-1     & ~~-1     &   ~~10   &   ~10  & 22 &	~2.1181378732419969(~0) &	 1.4(-12) \\
   ~~~1     & ~~10     &   ~-10   &   ~10  & 23 &	~2.3756889547019138(~0) &	 3.9(-12) \\
   ~-10     & ~-10     &   ~-10   &   ~10  & 35 &	-9.7139097706403668(~0) &	 4.8(-12) \\ \hline
\end{tabular*}
\label{TABLE: 5}
\end{table}

\begin{table}[!ht]
\caption{The ground state energy for $V(x) = c_1 x^2 + c_2 x^{4}+c_3 x^{6} + c_{4} x^{8}+ c_{5} x^{10}$. }
\begin{tabular*}{\hsize}{@{\extracolsep{\fill}}cccccccc} \sphline
   $c_{1}$ & $c_{2}$ & $c_{3}$ & $c_{4}$&$c_{5}$& $N$&  $\mathcal{E}_{0}(N)$          &  $\epsilon_{0}(N)$   \\\hline
   ~0.1     & ~0.1     &   ~0.1   &   ~0.1  &  0.1  & 27 &	~1.0520482472987258(~0) &	 4.9(-12) \\
   ~0.1     & ~0.1     &   ~~~1   &   ~~~1  &  ~~1  & 24 &	~1.5773348519927783(~0) &	 2.6(-12) \\
   ~~~1     & ~~~1     &   ~~~1   &   ~~10  &  ~10  & 23 &	~2.4237300030396556(~0) &	 3.1(-12) \\
   ~~~1     & ~~10     &   ~~10   &   ~~10  &  ~10  & 21 &	~3.0275420892666491(~0) &	 7.4(-13) \\
   ~~10     & ~~10     &   ~~10   &   ~~10  &  ~10  & 21 &	~4.0329202866021152(~0) &	 1.6(-12) \\ \hline
   -0.1     & -0.1     &   ~0.1   &   ~0.1  &  0.1  & 29 &	~9.2562395524222385(-1) &	 2.4(-12) \\
   ~0.1     & ~0.1     &   ~~-1   &   ~~-1  &  ~~1  & 33 &	~8.6187455263857027(-1) &	 4.4(-12) \\
   ~~-1     & ~~~1     &   ~~~1   &   ~-10  &  ~10  & 35 &	~1.3353894631528094(~0) &	 4.6(-12) \\
   ~~~1     & ~-10     &   ~-10   &   ~~10  &  ~10  & 28 &	~1.0275704201029547(~0) &	 2.8(-12) \\
   ~-10     & ~-10     &   ~-10   &   ~-10  &  ~10  & 52 &  -2.2446238129792420(~1) &	 2.7(-12) \\ \hline
\end{tabular*}
\label{TABLE: 6}
\end{table}

\begin{figure}[!ht]
\begin{center}
\begin{tabular}{cccc} \includegraphics[width=0.45\textwidth]{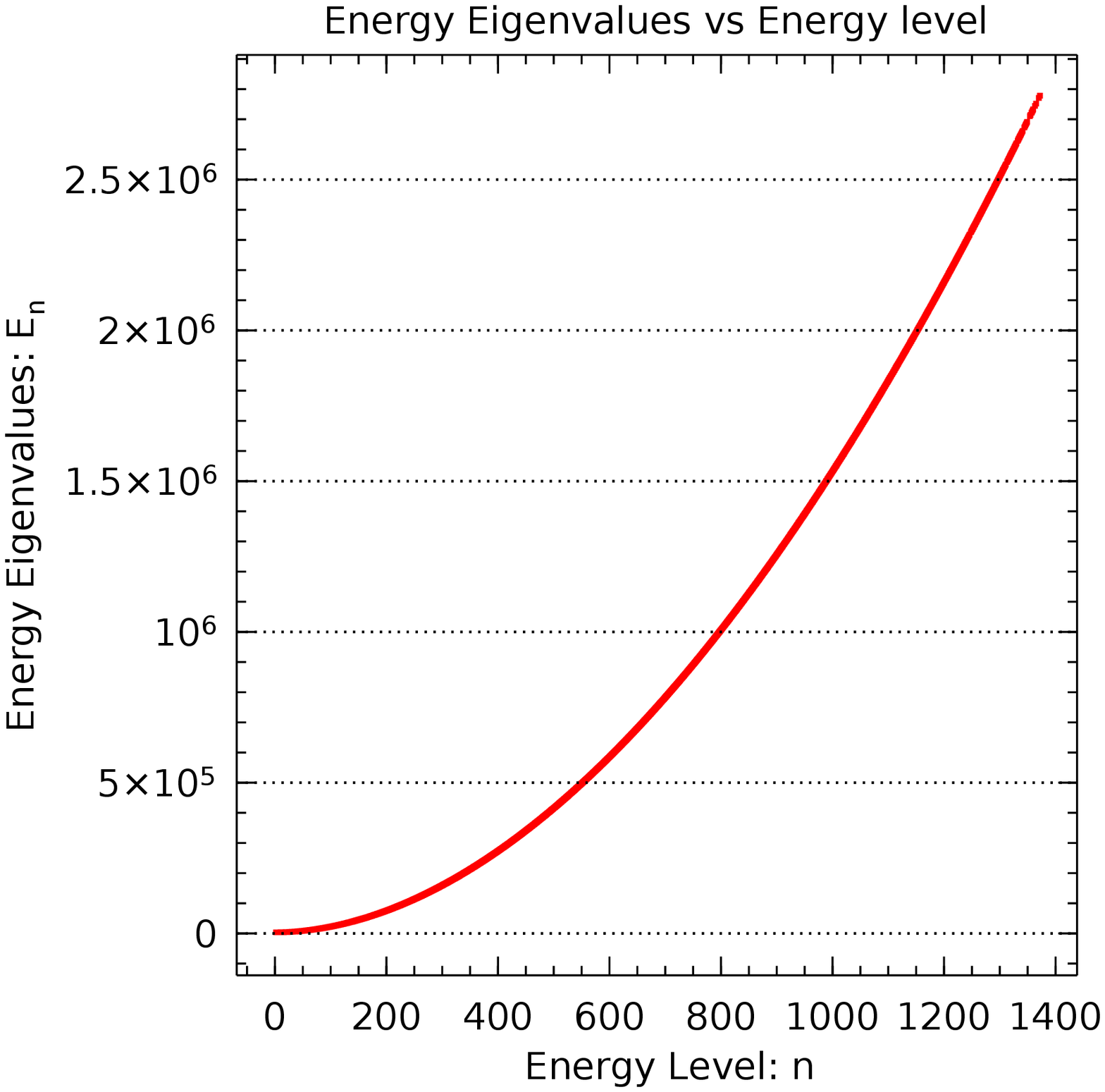} &  \includegraphics[width=0.45\textwidth]{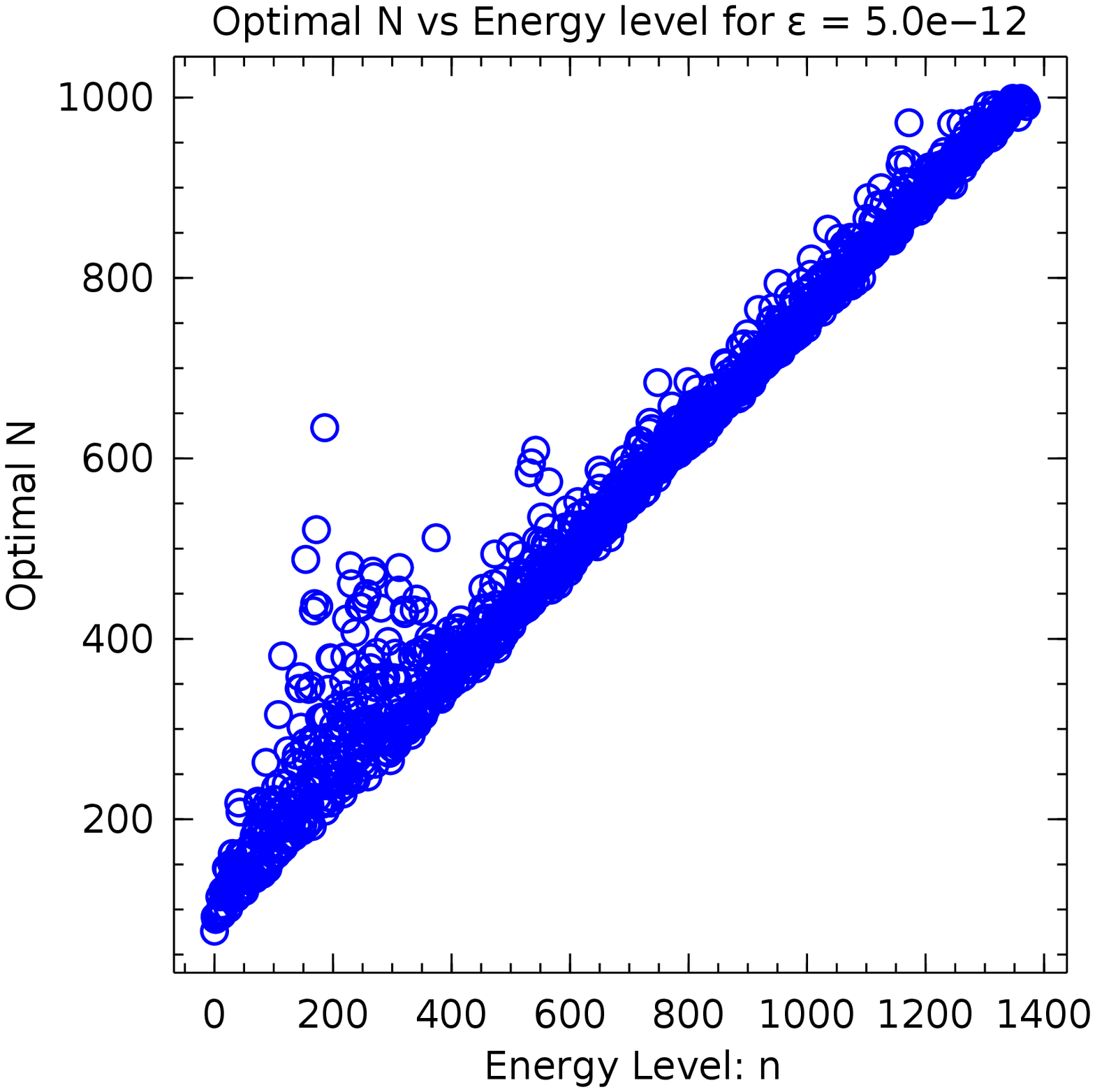} \\
(a) & (b)
\end{tabular}
\caption{Figure (a) displays an approximation for 1353 eigenvalues that achieved a approximate absolute error less than $5\times 10^{-12}$ for the ten well potential $V(x) = T_{20}(x)-1$ as shown in equation \eqref{formula: ten well potential} with $\phi(x) = \sinh(x)$. Figure (b) displays the value of $N$ needed for each eigenvalue in figure (a) to achieve an approximate absolute error less than $5\times 10^{-12}$.}
\label{figure: Multiple wells}
\end{center}
\end{figure}

\begin{figure}[!ht]
\begin{center}
\begin{tabular}{cccc} \includegraphics[width=0.34\textwidth]{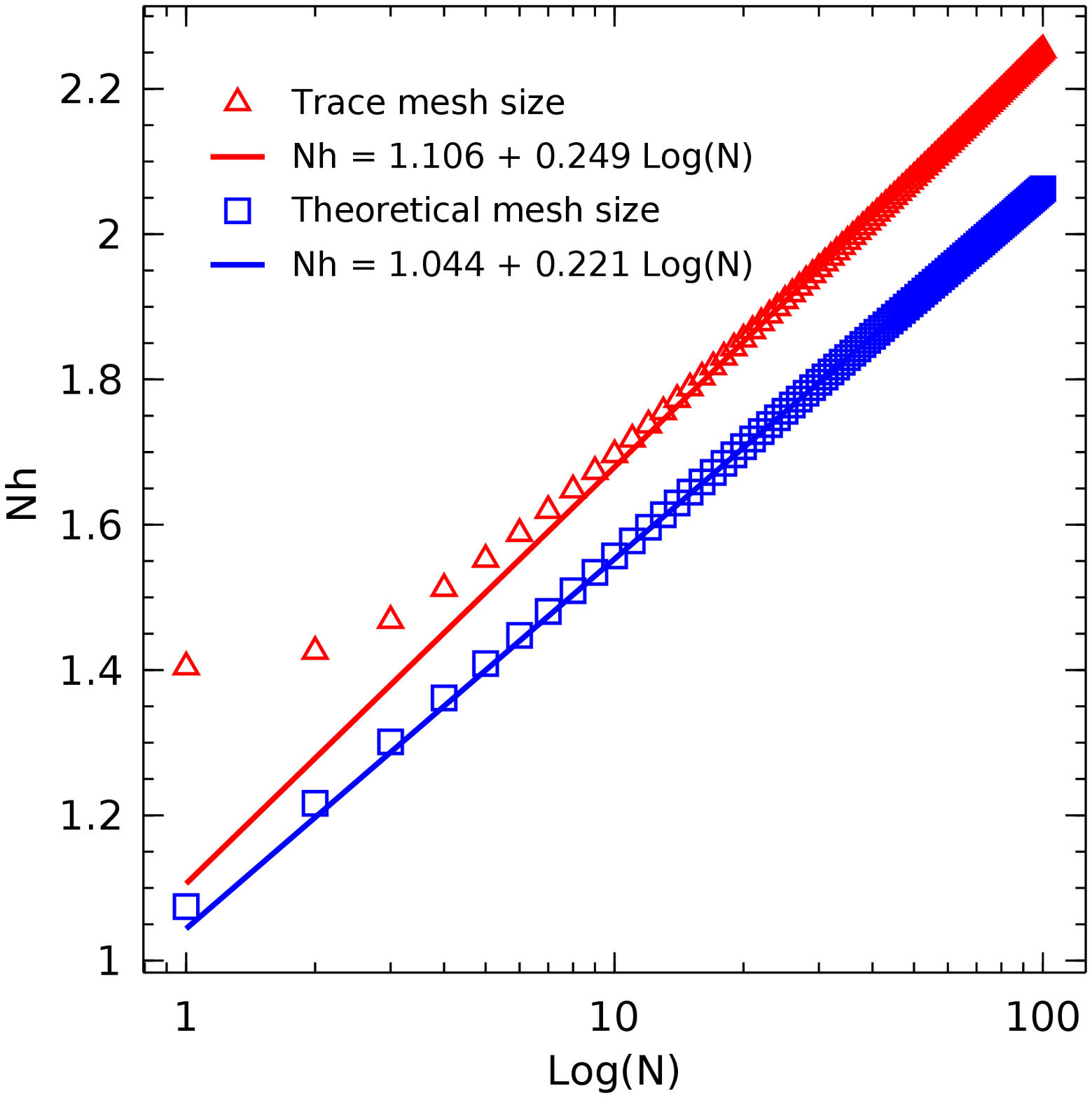} &  \includegraphics[width=0.35\textwidth]{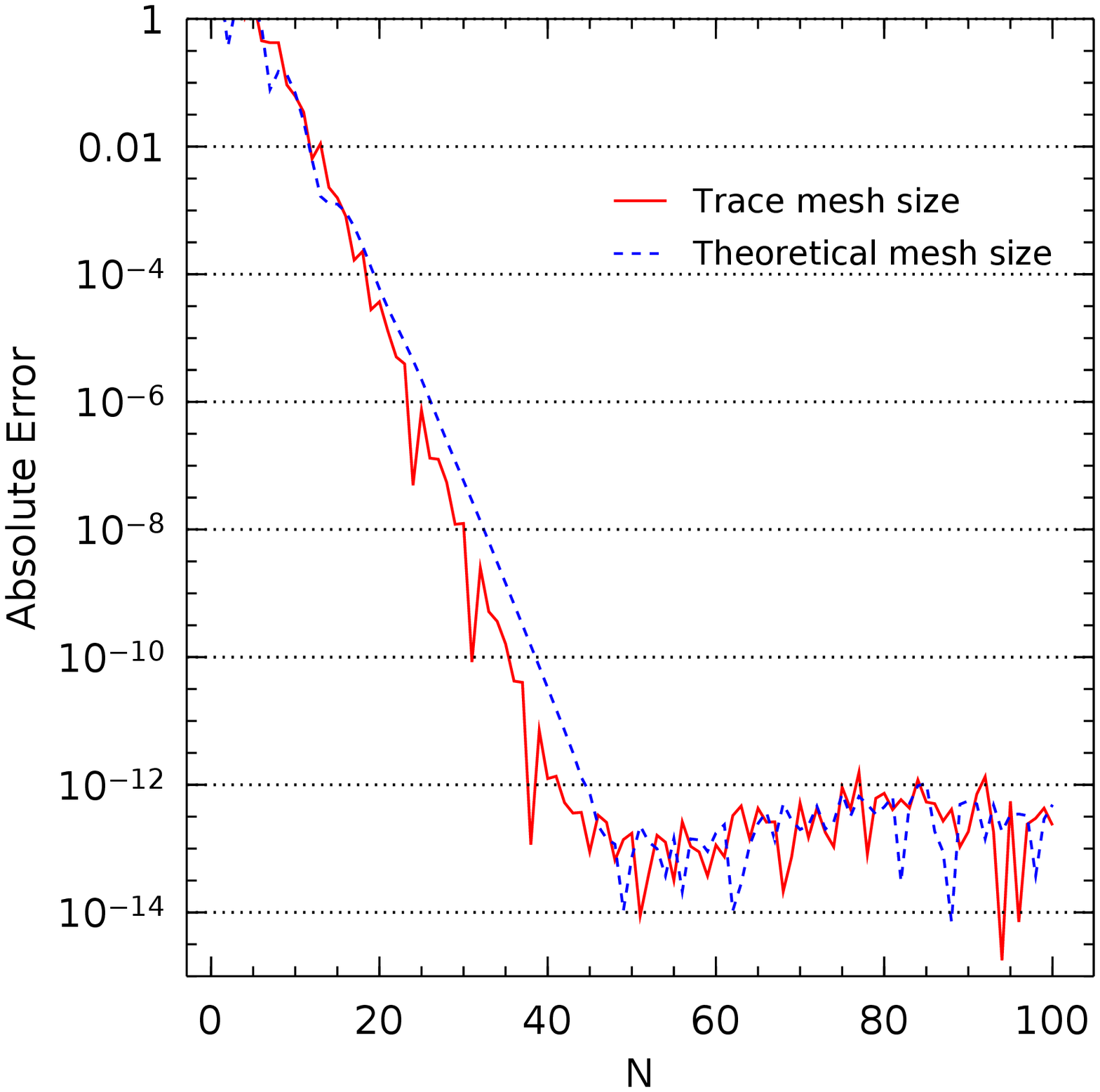} \\
(a) & (b) \\
 \includegraphics[width=0.34\textwidth]{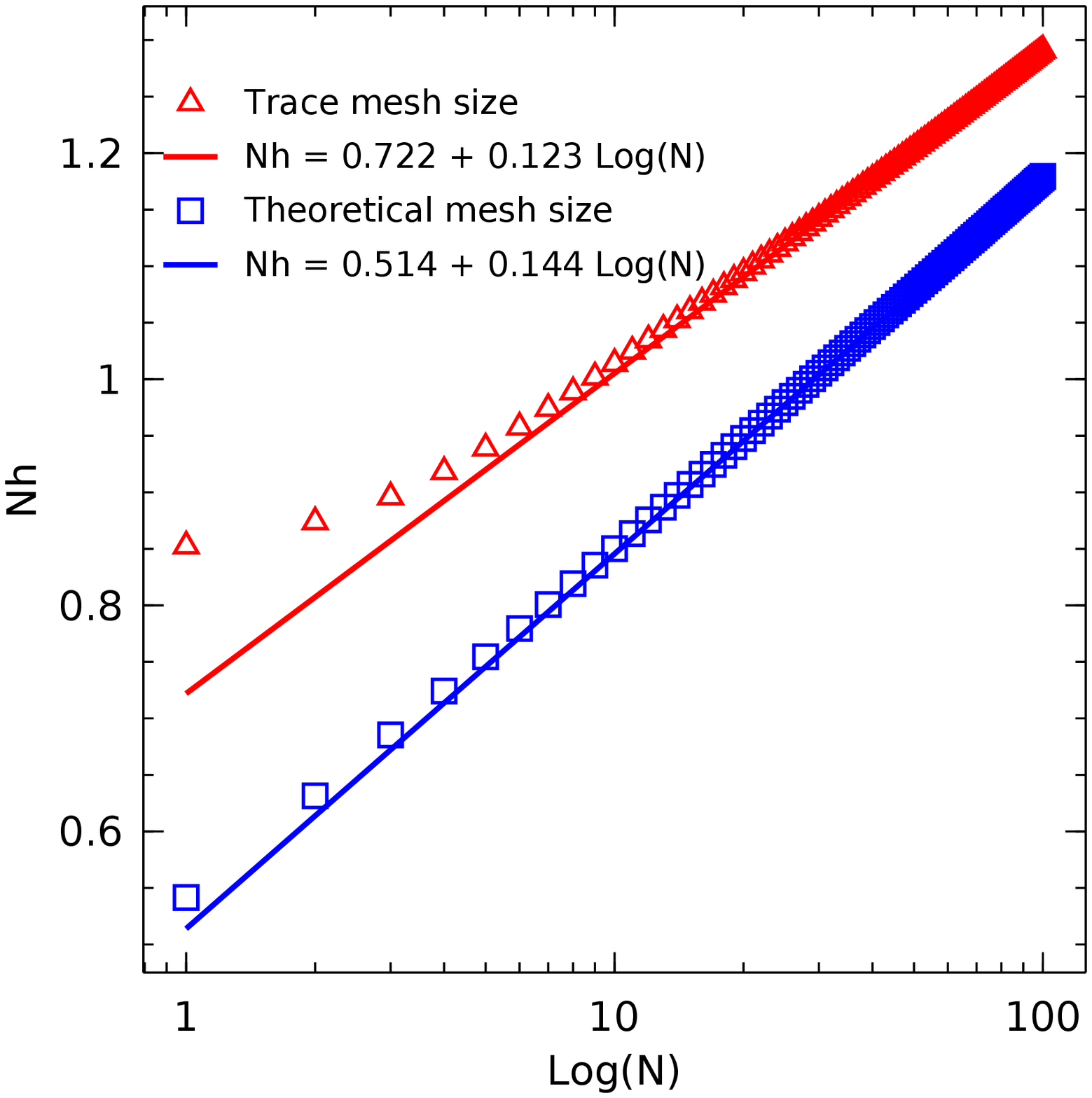} &  \includegraphics[width=0.35\textwidth]{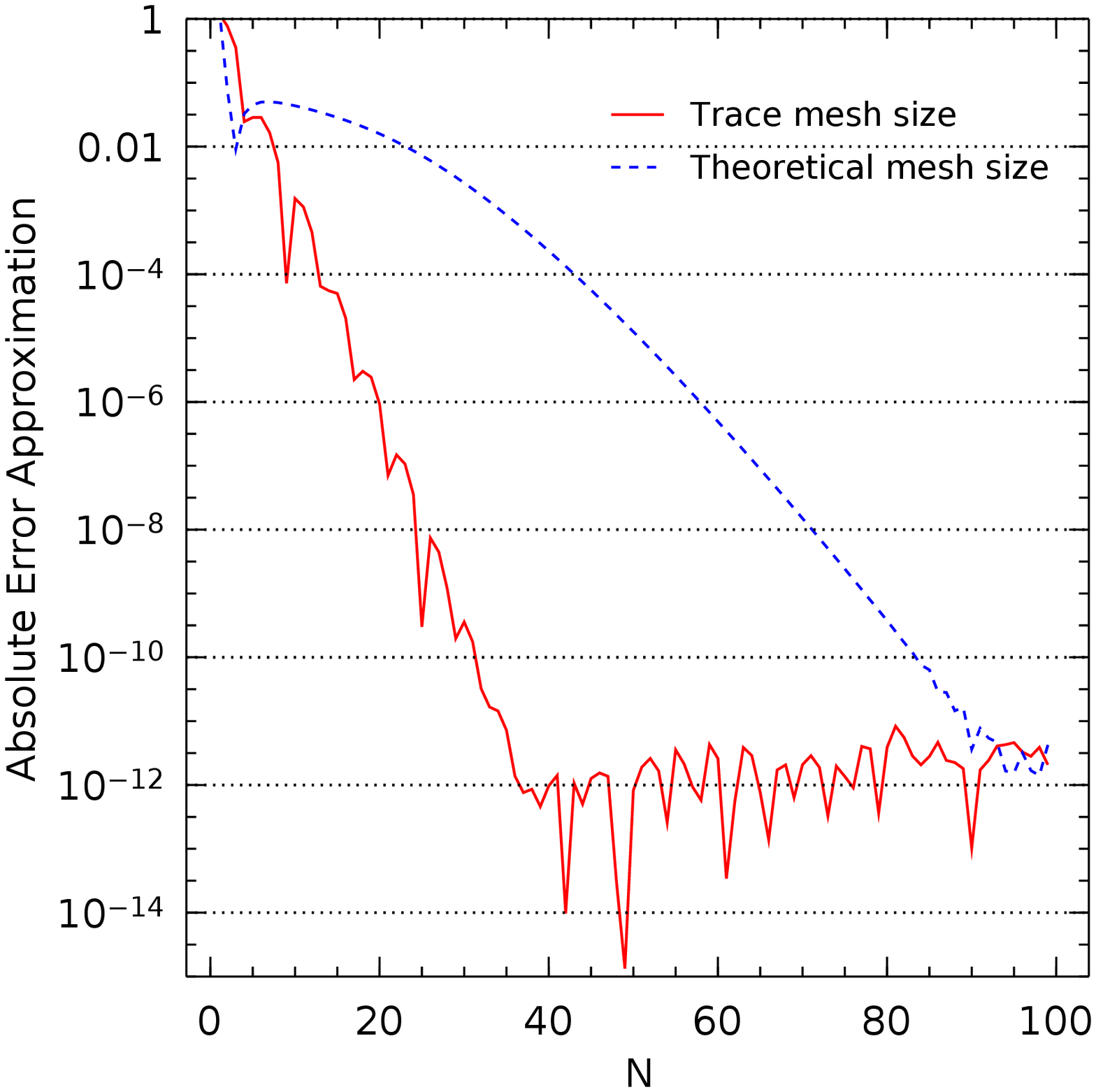} \\
(c) & (d) \\
\includegraphics[width=0.34\textwidth]{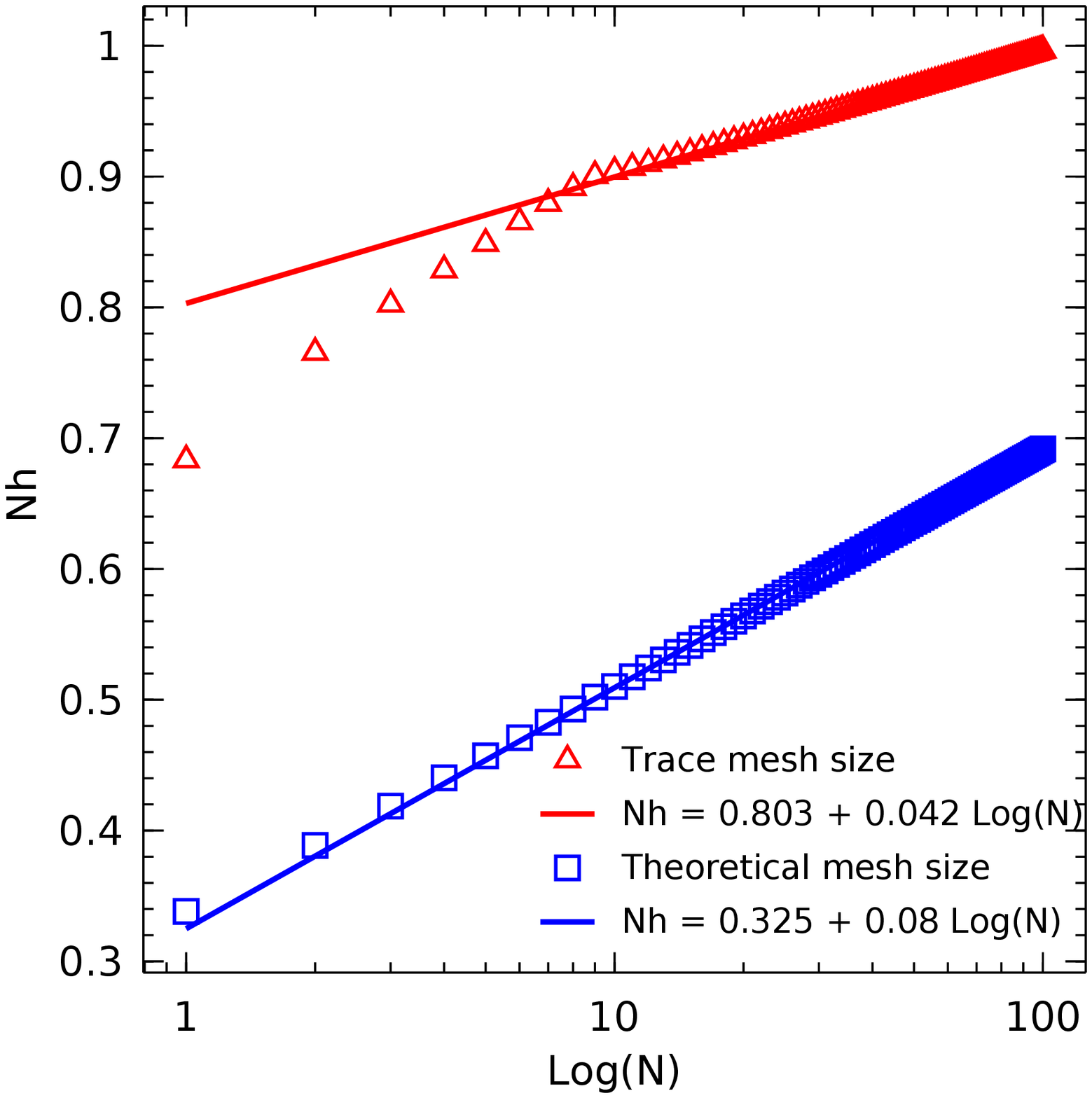} &  \includegraphics[width=0.35\textwidth]{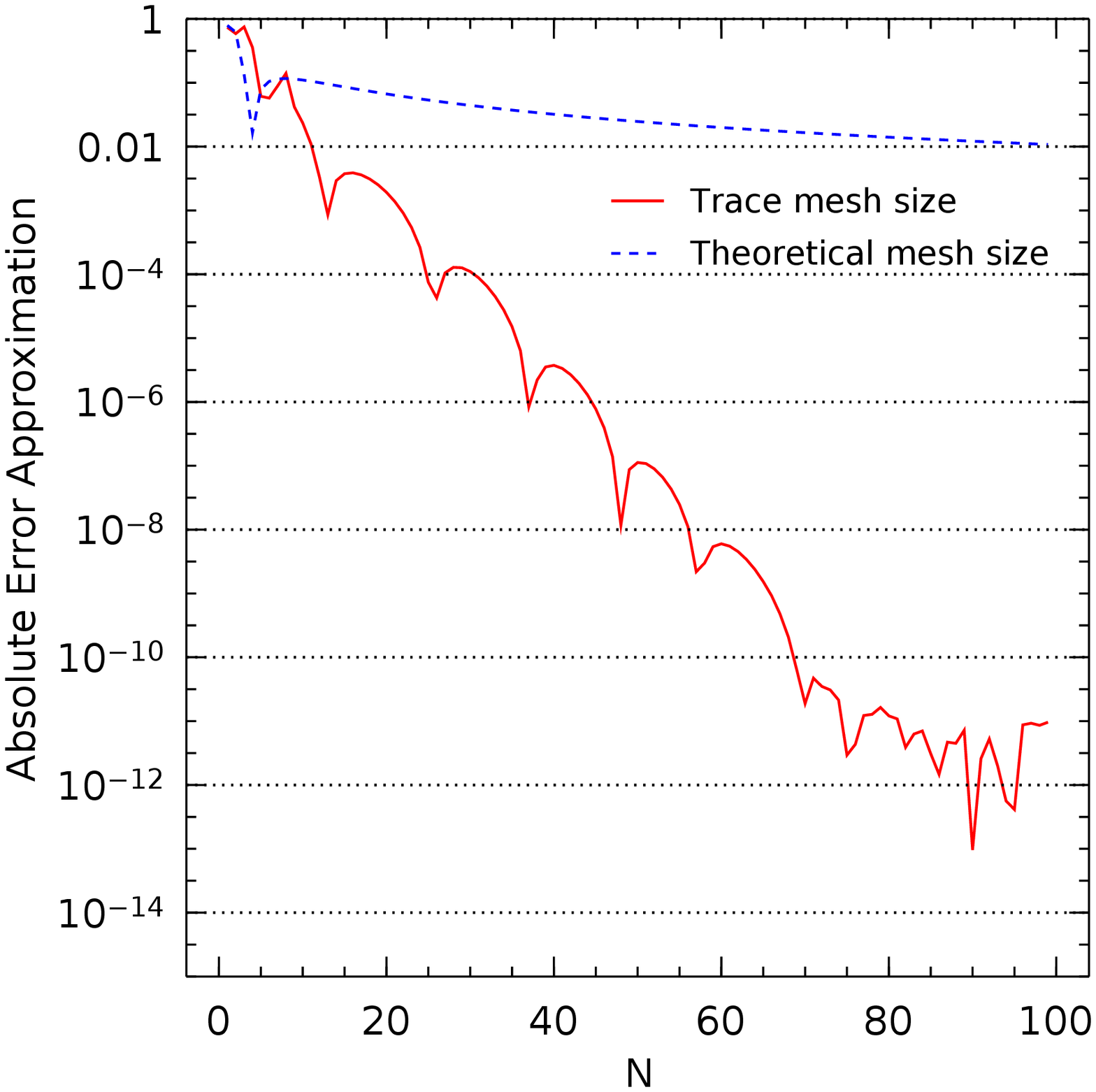} \\
(e) & (f)
\end{tabular}
\caption{Figures (a), (c) and (e) display both the theoretical and optimized trace step sizes used when evaluating the absolute error of the DESINC method in figure (b), (d) and (f) for the potentials $V_{2}(x)  =  4x^2 -6x^4+x^6$ with exact eigenvalue $E_{1} = -9$, $V(x) = T_{10}(x)-1$ and $V(x) = T_{20}(x)-1$ respectively. }
\label{figure: Multiple wells step size}
\end{center}
\end{figure}

\clearpage

\bibliography{My_Bibliography,library}

\end{document}